\theoremstyle{definition}
\newtheorem{teor}{Theorem}[section]
\newtheorem{prop}[teor]{Proposition}
\newtheorem{lem}[teor]{Lemma}
\newtheorem{defi}[teor]{Definition}
\newtheorem{ej}[teor]{Example}
\newtheorem{rem}[teor]{Remark}
\newtheorem{dyn}[teor]{Definition and Notation}
\newtheorem{Reminder}[teor]{Reminder }
\newcommand\N{\mathbb N}
\newcommand\R{\mathbb R}
\DeclareMathOperator\rk{rank}
\DeclareMathOperator{\spn}{span}
\title[Heuristic Algorithm]{An experimental approach for global polynomial optimization based on moments and semidefinite programming}
\author{Mar\'ia L\'opez Quijorna}
\begin{document}

\begin{abstract}
In this article we provide an experimental algorithm that in many cases gives us an upper bound of the global infimum of a real polynomial on $\R^{n}$. It is very well known that to find the global infimum of a real polynomial on $\R^{n}$, often reduces to solve a 	hierarchy of positive semidefinite programs, called moment relaxations. The algorithm that we present involves to solve a series of positive semidefinite programs whose feasible set is included in the feasible set of a moment relaxation. Our additional constraint try to provoke a flatness condition, like used by Curto and Fialkow, for the computed moments. At the end we present  numerical results of the application of the algorithm to nonnegative polynomials which are not sums of squares. We also provide numerical results for the application of a version of the algorithm based on the method proposed by Nie, Demmel and Sturmfels for the problem of minimizing a polynomial over its gradient variety.
\end{abstract}

\date{  July 6, 2017. }

\subjclass[2010]{Primary:  	90C26,	90C22,  ; Secondary: 44A60, 65F30}

\keywords{Semidefinite programming, Lassere relaxation, polynomial optimization, global optimization.}

\maketitle

\setlength{\parindent}{0pt}

\section{Notation}

Throughout this paper, we suppose $n\in\N=\{1,2,\ldots\}$ and abbreviate $(X_{1},\ldots,X_{n})$ by $\underline{X}$.  We let $\R[\underline{X}]$ denote the ring of real polynomials in n indeterminates. We  denote $\N_{0}:=\N\cup\{0\}$. For $\alpha\in\N^{n}_{0}$, we use the standard notation :
\begin{center}
$|\alpha|:=\alpha_{1}+ \cdots + \alpha_{n} $ and $\underline{X}^{\alpha}:=X_{1}^{\alpha_{1}}\cdots X_{n}^{\alpha_{n}}$
\end{center}
For a polynomial $p\in\R[\underline{X}]$ we denote $p=\sum_{\alpha}p_{\alpha}\underline{X}^{\alpha}$ ($p_{\alpha}\in\R$). For $d\in\N_{0}$, by the notation $\R[\underline{X}]_{d}:=\{\sum_{|\alpha|\leq d}a_{\alpha}\underline{X}^{\alpha}\text{ } |\text{ } a_{\alpha}\in\R \}$  we will refer to the vector space of polynomials with degree less or equal to $d$. Polynomials all of whose monomials have exactly the same degree $d\in\N_{0}$ are called $d$-forms.  They form a finite dimensional vector space that we will denote by:
\begin{equation}
\R[\underline{X}]_{=d}:=\{\sum_{|\alpha|=d}a_{\alpha}\underline{X}^{\alpha}\text{ }|\text{ }a_{\alpha}\in\R \} \nonumber
\end{equation}
so that
\begin{equation}
\R[\underline{x}]_{d}=\R[\underline{X}]_{0}\oplus\cdots\oplus\R[\underline{X}]_{d}.\nonumber
\end{equation}
We will denote by $s_{k}:=\dim \R[\underline{X}]_{k}$ and by $r_{k}:=\dim \R[\underline{X}]_{=k}$. For a matrix $A\in\R^{N\times M}$ we denote by $A_{1},\ldots,A_{M}$ its columns, and we denote $A^{T}$ the transpose matrix. We use the notation $SR^{N\times N}$ to refer us to the vector space of symmetric matrices $N\times N$ with entries in the ring $R$, that is to say:
\begin{equation}
SR^{N\times N}:=\{A\in\R^{N\times N}|\text{ } A=A^{T} \}\nonumber
\end{equation}
 For a matrix $A\in S\R^{N\times N}$, the notation $A\succeq 0$ means that $A$ is positive semidefinite, i.e. $a^{T}Aa\geq 0$ for all $a\in\R^{N}$. Let $v_{1},\ldots,v_{r}\in\R^{N}$ we write $\spn\left\langle v_{1},\ldots,v_{r}\right\rangle$ to refer us to the real linear subspace generated by the vectors $v_{1},\ldots,v_{r}$.


\section{Introduction and Preliminaries}

Let $f\in\R[\underline{X}]$. Let us consider a polynomial optimization problem without constraints, that is to say we consider the problem of find the minimum if possible, and minimizers if possible of the following polynomial optimization problem:
\begin{equation}\label{Popw}
\begin{aligned}
(P) \text{  minimize  } f(x) \text{ subject to } x\in \R^{n}
\end{aligned}
\end{equation}
The optimal value of $(P)$, i.e. the infimum of $f(x)$ where $x$ ranges over $\R^{n}$ will be denoted by $P^{*}$, that is to say:
\begin{equation}
\begin{aligned}
P^{*}:= 
 \inf\{\text{ }f(x)\text{ } |\text{ } x\in \R^{n}  \}\in\{-\infty\}\cup\R\cup\{\infty\}\nonumber
\end{aligned}
\end{equation}
In this paper we present an heuristic algorithm to find, in some cases, an upper bound $U$ of $P^{*}$, that is to say $U\geq P^{*}$ and if possible points $a\in\R^{n}$ such that $f(a)=U$. Let us first recall some preliminaries of basic concepts in semidefinite optimization that we will use in the final algorithm.

\begin{dyn} For $n,m\in\N_{0}$ a semidefinite program (in primal form) is a program of the following form:
\begin{align}\label{psd}
 (SDP)_{\ell,L}\text{   minimize }
\ell(x) &\text{ subject to: } \\ \nonumber
&
x \in\R^{n} \text{ and} \ \ L(x)\succeq 0  \\ \nonumber
&
\text{where } \ell\in\R[\underline{X}]_{1}\text{ and } L\in S\R[\underline{X}]_{1}^{m\times m}\text{ are given. }\nonumber
\end{align}
The optimal value of $(SDP)_{\ell,L}$, that is to say the infimum over all $x\in\R^{n}$ that ranges over all feasible solutions of $(SDP)_{\ell,L}$ is denoted by $(SDP)_{\ell,L}^{*}\in\{-\infty \}\cup\R\cup\{+\infty \}$.
\end{dyn}

\begin{rem}
Note that for $p\in\R[\underline{X}]_{1}$ we can add the linear condition $p(x)=0$ for all $x\in\R^{n}$ to the positive semidefinite program $(SDP)_{\ell,L}$ \eqref{psd}, by adding $p(x)\geq 0$ and $-p(x)\geq 0$ to the diagonal of a bigger symmetric matrix, that is to say considering:
\begin{equation}
L(x) := \left(
\begin{array}{r|r|r}
 L(x) & 0 & 0 \\ \cline{1-3}
 0 &-p(x) & 0 \\ \cline{1-3}
 0 & 0 & p(x)
   \end{array}
    \right)\succeq 0 \nonumber
\end{equation}
\end{rem}

Summarizing a semidefinite program is the cone of the positive semidefinite matrices intersected with a linear subspace. Semidefinite programs can also be seen as generalization of linear programs since a linear program is a semidefinite program $(SDP)_{\ell,L}$ where $L$ is a diagonal matrix. Semidefinite programs are possible to solve efficiently and there are many softwares and packages that allows to solve them, in particular we will use SEDUMI and YALMIP, see \ref{software}.

Let us recall how to try to solve the polynomial optimization problem $(P)$ \eqref{Popw}, by
solving a hierarchy of very well known semidefinite programs called Moment Relaxation or Lasserre relaxation of certain degree. For this, let $d\in\N_{0}$ and let us define: 
\begin{align}
    V_{d}:= (&1,X_{1}, X_{2}, \ldots,  X_{n},X_{1}^2,X_{1}X_{2},\ldots,X_{1}X_{n},\\
		&X_{2}^2,X_{2}X_{3},\ldots,X_{n}^2,\ldots,\ldots,X_{n}^d)^{T} \nonumber
		\end{align}
as a basis for the vector space of polynomials in $n$ variables of degree at most $d$. Then
\begin{center}
$V_{d}V_{d}^{T}=\left(
\begin{array}{ccccc}
 1         & X_{1}         & X_{2}          &  \cdots &    X_{n}^{d}    \\
 X_{1}     & X_{1}^2       & X_{1}X_{2}     &  \cdots &    X_{1}X_{n}^{d}   \\
 X_{2}     & X_{1}X_{2}    & X_{2}^2        &  \cdots &    X_{2}X_{n}^{d} \\
 \vdots    & \vdots        & \vdots         &  \ddots &    \vdots \\
 X_{n}^{d} &X_{1}X_{n}^{d} & X_{2}X_{n}^{d} &  \cdots &    X_{n}^{2d}\\
    \end{array}
   \right)\in S\R[\underline{X}]_{2d}^{s_{d}\times s_{d}}$
	\end{center}

Let us substitute for every monomial $\underline{X}^{\alpha}\in\R[\underline{X}]_{2d}$ a new variable $Y_{\alpha}$. This matrix has the following form: 
\begin{equation}\label{generalizedHankel}
M_{d}:=\left(
\begin{array}{ccccc}
 Y_{(0,\ldots,0)}          & Y_{(1,\ldots,0)}               & Y_{(0,1,\ldots,0)}          &  \cdots &    Y_{(0,\ldots,1)}    \\
 Y_{(1,\ldots,0)}         & Y_{(2,\ldots,0)}              & Y_{(1,1,\ldots,0)}          &  \cdots &     Y_{(1,\ldots,d)}   \\
 Y_{(0,1,\ldots,0)}       & Y_{(1,1,\ldots,0)}            &  Y_{(0,2,\ldots,0)}          &  \cdots &   Y_{(0,1,\ldots,d)}   \\
 \vdots                    & \vdots        & \vdots         &  \ddots &    \vdots \\
Y_{(0,\ldots,d)}       & Y_{(1,\ldots,d)}     & Y_{(0,1,\ldots,d)}    &  \cdots &   Y_{(0,\ldots,2d)}   \\
    \end{array}
   \right)\in S\R[\underline{Y}]_{1}^{s_{d}\times s_{d}}
\end{equation}

\begin{dyn} Every matrix $M\in\R^{s_{d}\times s_{d}}$ with the same shape than the matrix \eqref{generalizedHankel} is called a generalized Hankel matrix (or Moment matrix) of order $d$.
\end{dyn}
Let us shortly explain how the Lasserre relaxation transform the polynomial optimization problem $(P)$ \eqref{Popw} into a semidefinite program \eqref{psd}. The idea is that, for every $d\in\N_{0}$ the problem $(P)$ is equivalent to the following problem:
\begin{align}\label{explanationauxiliar}
 \text{   minimize } \nonumber  f(x)  &\text{ subject to }\\  
& x \in\R^{n} \text{ and } p(x)^{2}\geq 0 \text{ for all }p\in\R[\underline{X}]_{d}  
\end{align}
Let us denote $\tilde{p}:=(p_{(0,\ldots,0)},\ldots,p_{(0,\ldots,d)})^{T}\in\R^{s_{2d}}$, the vector with the coefficients of $p$. Then the trivial inequalitiy $p(x)^{2}\geq 0$ for all $x\in\R^{n}$ and for all $p\in\R[\underline{X}]_{d}$ can be written as $\tilde{p}^{T}V_{d}V_{d}^{T}(x)\tilde{p}\geq 0$ for all $x\in\R^{n}$ and for all $\tilde{p}\in\R^{s_{2d}}$, and this last equality can also be writen as $V_{d}V_{d}^{T}(x)\succeq 0$ for all $x\in\R^{n}$.

Since $V_{d}V_{d}^{T}\in S\R[\underline{X}]_{2d}^{s_{d}\times s_{d}}$ is not a matrix with linear entries, the next idea is to substitute every monomial  $\underline{X}^{\alpha}$  for a new variable $Y_{\alpha}$ in this way we will not have anymore an equivalent problem to \eqref{Popw} but a "relaxation" of  the problem, that is to say the feasible set will be bigger and consequently by solving this relaxation problem we will get a lower bound of the infimum. For better introduction of the moment relaxation with more details we refer the reader to \cite{lau}, \cite{mar}, \cite{sch} and references therein.
 
\begin{dyn}\label{relax}
Let $(P)$ be a polynomial optimization problem  as in \eqref{Popw} and 
let $k\in$ $\N_{0}\cup\{\infty\}$ such that $f\in\R[\underline{X}]_{k}$. The Moment relaxation (or Lasserre relaxation) of $(P)$ of degree $k$ is the following semidefinite optimization problem:
\begin{align}
(P_{k}) \text{ minimize } \sum_{|\alpha|\leq k} f_{\alpha}y_{\alpha}\nonumber & \text{ subject to } \\ \nonumber
& M_{\lfloor\frac{\deg{k}}{2}\rfloor}(y)\succeq 0 \text{ and }  y_{\left(0,\ldots,0\right)}=1 \nonumber
\end{align}
the optimal value of $(P_{k})$ that is to say, the infimum over all
\begin{equation}
 y=(y_{\left(0,\ldots,0 \right)},\ldots,y_{\left(0,\ldots,k \right)})\in\R^{s_{k}}\nonumber
 \end{equation} that ranges over all feasible solutions of $(P_{k})$ is denoted by $P^{*}_{k}\in\{-\infty\}\cup\R\cup\{\infty\} $.
\end{dyn}

Let us remember some trivial properties of the Moment relaxations:

\begin{prop}\label{motivation}
Let $(P)$ be a polynomial optimization problem as in $\eqref{Popw}$ and let $k\in\N_{0}\cup\{ \infty \}$ such that $f\in\R[\underline{X}]_{k}$. Set  $d:=\lfloor\frac{\deg{k}}{2}\rfloor$. The following holds:
\begin{enumerate}
\item $P^{*}\geq\ldots\geq P^{*}_{k+1}\geq P^{*}_{k}$
\item  Every matrix of the form:
\begin{equation}
 M=\sum_{i=1}^{N}\lambda_{i}V_{d}V_{d}^{T}(a_{i})\in S\R^{s_{d} \times s_{d}}\nonumber
 \end{equation}
  with $a_{i}\in\R^{n}$ for all $i\in\{1,\ldots,N\}$ and with $\sum_{i=1}^{N}\lambda_{i}=1$ is the Moment matrix of a feasible solution $y\in\R^{s_{k}}$ of $(P_{k})$, i.e. $M=M_{d}(y)$, and $\sum_{|\alpha|\leq k} f_{\alpha}y_{\alpha}\geq P^{*}$.
  \item If $(P_{k})$ has an optimal solution $y\in\R^{s_{k}}$ such that there exists $N\in\N$ and $a_{1},\ldots,a_{N}\in\R^{n}$ and $\lambda_{1}>0,\ldots,\lambda_{N}>0$ such that:
	\begin{equation}
	M_{d}(y)=\sum_{i=1}^{N}\lambda_{i}V_{d}V_{d}^{T}(a_{i})\in S\R^{s_{d} \times s_{d}}\nonumber
	\end{equation}
	Then $P^{*}=P^{*}_{k}$ and $a_{1},\ldots,a_{N}$ are minimizers of $f$.
\end{enumerate}

\end{prop}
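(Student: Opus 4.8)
The plan is to handle the three parts in order, each reducing to an elementary verification once the right dictionary between points $a\in\R^{n}$ and moment matrices is set up.

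For part (1), the key observation is that the feasible set of $(P_{k+1})$ projects onto a subset of the feasible set of $(P_{k})$: if $y\in\R^{s_{k+1}}$ is feasible for $(P_{k+1})$, then its truncation $y'=(y_{\alpha})_{|\alpha|\le k}$ satisfies $y'_{(0,\dots,0)}=1$, and $M_{\lfloor k/2\rfloor}(y')$ is a principal submatrix of $M_{\lfloor(k+1)/2\rfloor}(y)$, hence positive semidefinite. Since the objective $\sum_{|\alpha|\le k}f_\alpha y_\alpha$ depends only on the truncated coordinates, we get $P^{*}_{k}\le P^{*}_{k+1}$. For the bound $P^{*}\ge P^{*}_{k+1}$ I would invoke part (2): any genuine minimizer (or near-minimizer) $a\in\R^{n}$ gives the feasible point $V_{k+1}(a)$ whose objective value equals $f(a)$, so the infimum $P^{*}$ dominates $P^{*}_{k+1}$. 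Strictly, if $P^{*}=-\infty$ or is not attained the argument is via an infimizing sequence, but the chain of inequalities is the same.

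For part (2), I would first check that $M=\sum_{i=1}^{N}\lambda_i V_d V_d^{T}(a_i)$ really is a generalized Hankel matrix of order $d$: its $(\alpha,\beta)$ entry is $\sum_i\lambda_i a_i^{\alpha+\beta}$, which depends only on $\alpha+\beta$, and this defines $y_\gamma:=\sum_i\lambda_i a_i^{\gamma}$ for $|\gamma|\le 2d$ (extended suitably to degree $k$ using the same formula, which is legitimate since $f\in\R[\underline X]_k$ only involves $y_\alpha$ with $|\alpha|\le k\le 2d$). Then $y_{(0,\dots,0)}=\sum_i\lambda_i=1$ by hypothesis, and $M_d(y)=M=\sum_i\lambda_i V_dV_d^T(a_i)$ is a convex-type combination of positive semidefinite rank-one matrices, hence $M_d(y)\succeq 0$; so $y$ is feasible. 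Finally $\sum_{|\alpha|\le k}f_\alpha y_\alpha=\sum_i\lambda_i\sum_\alpha f_\alpha a_i^\alpha=\sum_i\lambda_i f(a_i)\ge\bigl(\sum_i\lambda_i\bigr)P^{*}=P^{*}$, using $f(a_i)\ge P^{*}$ and $\lambda_i\ge 0$.

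For part (3), combine the previous computation with part (1). By part (2) the stated $y$ has objective value $\sum_i\lambda_i f(a_i)$, and since this $y$ is assumed \emph{optimal} for $(P_k)$ we get $P^{*}_{k}=\sum_i\lambda_i f(a_i)\ge P^{*}$; together with $P^{*}\ge P^{*}_{k}$ from part (1) this forces $P^{*}_{k}=P^{*}$ and also $\sum_i\lambda_i f(a_i)=P^{*}=\bigl(\sum_i\lambda_i\bigr)P^{*}$. Since each $\lambda_i>0$ and $f(a_i)\ge P^{*}$, equality in $\sum_i\lambda_i\bigl(f(a_i)-P^{*}\bigr)=0$ forces $f(a_i)=P^{*}$ for every $i$, i.e. each $a_i$ is a minimizer of $f$. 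I do not expect any real obstacle here; the only mild subtlety is the bookkeeping in part (2) about which coordinates $y_\alpha$ (those with $|\alpha|\le k$ versus $|\alpha|\le 2d$) actually enter the objective and the moment matrix, and the degenerate cases $P^{*}\in\{\pm\infty\}$ in part (1), which should be stated as a chain of inequalities in $\{-\infty\}\cup\R\cup\{\infty\}$ rather than as equalities of real numbers.
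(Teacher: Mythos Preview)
Your argument is correct and is the standard one; the paper itself does not give a proof but simply cites \cite[Proposition 3.9]{mlq1}, so there is nothing to compare on the level of method.

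Two small bookkeeping remarks. First, in part~(2) you write ``$|\alpha|\le k\le 2d$'', but in fact $d=\lfloor k/2\rfloor$ gives $2d\le k$, not the reverse; this does not harm your argument, since defining $y_\gamma:=\sum_i\lambda_i a_i^{\gamma}$ for all $|\gamma|\le k$ already supplies every moment needed both for the objective (degrees $\le k$) and for $M_d(y)$ (degrees $\le 2d\le k$). Second, the statement of part~(2) as printed only assumes $\sum_i\lambda_i=1$, not $\lambda_i\ge 0$; your proof (correctly) uses $\lambda_i\ge 0$ both for $M_d(y)\succeq 0$ and for the inequality $\sum_i\lambda_i f(a_i)\ge P^{*}$. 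Without nonnegativity neither conclusion holds in general, so this is an implicit hypothesis that the paper omits and that you are right to invoke.
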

\begin{proof}
For a proof of this Proposition we refer to \cite[Proposition 3.9]{mlq1}.
\end{proof}

Let us now, recall the Theorem \ref{above} that will be use in the Algorithm \ref{upper}. The Theorem \ref{above} gives us a condition to detect optimality in an optimal solution of the Moment relaxation, that is to say $P^{*}=P_{k}$.


\begin{prop}
Let $y\in\R^{s_{k}}$ be a feasible solution of $(P_{k})$, set $d:=\left\lfloor{\frac{k}{2}}\right\rfloor$ and $M:=M_{d}(y)$. There exist $W\in\R^{s_{d} \times r_{d}}$ and $C\in\R^{r_{d}\times r_{d}}$ such that $M$ can be decomposed in a block matrix of the following form:
$$M=
\left(
\begin{array}{c|c}
\makebox{$A$}&\makebox{$AW$}\\
\hline
  \vphantom{\usebox{0}}\makebox{$W^{T}A$}&\makebox{$C$}
\end{array}
\right)
$$
 For the matrix $M$ we define and denote its respective modified Moment matrix as it follows:
\begin{equation}
\widetilde{M}:=\left(
\begin{array}{c|c}
\makebox{$A$}&\makebox{$AW$}\\
\hline
  \vphantom{\usebox{0}}\makebox{$W^{T}A$}&\makebox{$W^{T}AW$}
\end{array}\right)\nonumber
\end{equation}
and $\widetilde{M}$ is well defined that is to say, it does not depend from the choice of $W$.
\end{prop}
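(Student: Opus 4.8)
The plan is to reduce the whole statement to a single elementary property of positive semidefinite matrices. First I would fix the block decomposition: order the basis $V_{d}$ so that the $s_{d-1}$ monomials of degree at most $d-1$ come first and the $r_{d}$ monomials of degree exactly $d$ come last. This writes $M=M_{d}(y)$ as a symmetric block matrix
\begin{equation}
M=\left(\begin{array}{c|c}A&B\\ \hline B^{T}&C\end{array}\right),\nonumber
\end{equation}
with $A\in S\R^{s_{d-1}\times s_{d-1}}$ (in fact $A=M_{d-1}(y)$), $B\in\R^{s_{d-1}\times r_{d}}$ and $C\in S\R^{r_{d}\times r_{d}}$. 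Since $y$ is feasible for $(P_{k})$ we have $M\succeq 0$, so the principal submatrix $A$ is positive semidefinite as well. With this notation the first assertion is equivalent to exhibiting $W$ with $AW=B$: once this is available, the symmetry $A=A^{T}$ gives $B^{T}=(AW)^{T}=W^{T}A$ for free, and $M$ acquires exactly the displayed shape with $C$ the last diagonal block.

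The only step that needs an idea rather than bookkeeping is the lemma: if $\left(\begin{smallmatrix}A&B\\ B^{T}&C\end{smallmatrix}\right)\succeq 0$, then every column of $B$ lies in the column space of $A$. I would prove it directly: take $v$ with $Av=0$; for any $t$, evaluating the quadratic form at $w=(v^{T},t^{T})^{T}$ gives $w^{T}Mw=2t^{T}B^{T}v+t^{T}Ct\ge 0$, and choosing $t=-\varepsilon B^{T}v$ with $\varepsilon>0$ small makes the linear term dominate, forcing $B^{T}v=0$. Hence $\ker A\subseteq\ker B^{T}$; taking orthogonal complements and using $A=A^{T}$ yields $\spn\left\langle B_{1},\ldots,B_{r_{d}}\right\rangle=(\ker B^{T})^{\perp}\subseteq(\ker A)^{\perp}=\spn\left\langle A_{1},\ldots,A_{s_{d-1}}\right\rangle$. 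Writing each column $B_{j}=Aw_{j}$ and taking $W$ to have columns $w_{1},\ldots,w_{r_{d}}$ gives $AW=B$, which proves existence.

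It then remains to check that $\widetilde{M}$ is well defined, i.e. that the block $W^{T}AW$ depends only on $A$ and $B$. If $W,W'$ both satisfy $AW=AW'=B$, then $A(W-W')=0$ and, by symmetry of $A$, also $(W-W')^{T}A=0$; therefore
\begin{equation}
W^{T}AW=W^{T}AW'=(W')^{T}AW',\nonumber
\end{equation}
the first equality applying $A(W-W')=0$ on the right and the second applying $(W-W')^{T}A=0$ on the left. Hence $\widetilde{M}$ does not depend on the choice of $W$.

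I do not expect a serious obstacle here: the positive‑semidefiniteness lemma used to produce $W$ is the only non‑formal ingredient, and it is classical. The block decomposition, the identity $B^{T}=W^{T}A$, and the independence of $\widetilde{M}$ from $W$ are then purely formal consequences of the symmetry of $A$; the only thing to be careful about is keeping the degree‑based splitting of $V_{d}$ fixed throughout so that $A$, $B$, $C$ always refer to the same blocks.
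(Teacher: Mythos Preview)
Your argument is correct. The key range--kernel inclusion $\ker A\subseteq\ker B^{T}$ for a positive semidefinite block matrix is exactly the classical lemma underlying this result, and your derivation of it via the quadratic form with $t=-\varepsilon B^{T}v$ is clean; the rest (existence of $W$ with $AW=B$, symmetry giving $B^{T}=W^{T}A$, and independence of $W^{T}AW$ from the choice of $W$) follows formally just as you say.

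As for comparison with the paper: the paper does not actually prove this proposition in the text. It only records that the result ``can be also found in \cite{Smu} and in \cite[Lemma 4.8]{mlq1}'' and defers the argument to those references. So your write-up is in fact more self-contained than what appears here; you are supplying precisely the elementary positive-semidefinite lemma that the cited sources provide. One minor remark: the statement in the paper writes $W\in\R^{s_{d}\times r_{d}}$, but your construction (and the block sizes) show that $W$ must lie in $\R^{s_{d-1}\times r_{d}}$; this is a typo in the statement rather than an issue with your proof.
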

\begin{proof}
This useful result can be also found in \cite{Smu} and in \cite[Lemma 4.8]{mlq1}.
\end{proof}

\begin{defi}
Let $y\in\R^{s_{k}}$ be a feasible solution of $(P_{k})$ and set $d:=\left\lfloor{\frac{k}{2}}\right\rfloor$. We say $M:=M_{d}(y)$ is a flat matrix if the following condition holds:
\begin{equation}
M=\widetilde{M}
\end{equation}
\end{defi}

\begin{rem}
Let $y\in\R^{s_{k}}$ be a feasible solution of $(P_{k})$ and set $d:=\left\lfloor{\frac{k}{2}}\right\rfloor$. Note that $M:=M_{d}(y)$ is a flat matrix if the following condition in the rank of $M$ holds:
\begin{equation}
\rk M_{d}(y)= \rk M_{d-1}(y)
\end{equation}
\end{rem}

\begin{teor}\label{above}
Let $f\in\R[\underline{X}]_{k}$ and $(P)$ be the polynomial optimization problem without constraints defined in \eqref{Popw}, and let $y\in\R^{s_{k}}$ be an optimal solution of the Moment relaxation $(P_{k})$ and set $d:=\left\lfloor{\frac{k}{2}}\right\rfloor$. Then the following conditions hold:
\begin{enumerate}
\item If $\widetilde{M_{d}(y)}$ is generalized Hankel and $f\in\R[\underline{X}]_{k-1}$ then $P^{*}=P^{*}_{k}$, there exit $\lambda_{1}>0,\ldots,\lambda_{r}>0$ and $a_{1},\ldots,a_{r}\in\R^{n}$ such that:
\begin{equation}
\widetilde{M_{d}(y)}=\sum_{i=1}^{r}\lambda_{i}V_{d}V_{d}^{T}(a_{i})\in S\R^{s_{d} \times s_{d}}\nonumber
\end{equation}
and $a_{1},\ldots,a_{r}$ are minimizers of $f$.
\item If $M_{d}(y)$ is flat then $P^{*}=P^{*}_{k}$, there exit $\lambda_{1}>0,\ldots,\lambda_{r}>0$ and $a_{1},\ldots,a_{r}\in\R^{n}$ such that:
\begin{equation}
M_{d}(y)=\sum_{i=1}^{r}\lambda_{i}V_{d}V_{d}^{T}(a_{i})\in S\R^{s_{d} \times s_{d}}\nonumber
\end{equation}
and $a_{1},\ldots,a_{r}$ are minimizers of $f$.
\end{enumerate} 
\end{teor}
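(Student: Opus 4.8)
The plan is to deduce both parts from the flat extension theorem of Curto and Fialkow together with Proposition~\ref{motivation}(3), reducing part~(1) to part~(2).

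\emph{Part (2).} Assume $M := M_d(y)$ is flat; by the Remark above this is equivalent to $r := \rk M_d(y) = \rk M_{d-1}(y)$. The decisive input is the Curto--Fialkow flat extension theorem: a positive semidefinite moment matrix $M_d$ with $\rk M_d = \rk M_{d-1}$ admits a positive semidefinite flat extension to a moment matrix of order $d+1$ of the same rank, and iterating one obtains a positive semidefinite infinite moment matrix $M_\infty(\hat y)$, with $\hat y$ extending $y$ and $\rk M_\infty(\hat y) = r$. For such a finite-rank positive semidefinite infinite moment matrix the kernel is a zero-dimensional real radical ideal $I$ with $\dim_\R \R[\underline X]/I = r$, so $\hat y$ is the moment sequence of an $r$-atomic positive measure: there are pairwise distinct $a_1,\dots,a_r\in\R^n$ and $\lambda_1,\dots,\lambda_r>0$ with $M_t(\hat y)=\sum_{i=1}^r\lambda_i V_tV_t^T(a_i)$ for every $t$, where $\sum_i\lambda_i=y_{(0,\dots,0)}=1$. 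Taking $t=d$ gives $M_d(y)=\sum_{i=1}^r\lambda_i V_dV_d^T(a_i)$. Since $y$ is an optimal solution of $(P_k)$ of this shape, Proposition~\ref{motivation}(3) yields $P^*=P^*_k$ and that $a_1,\dots,a_r$ are minimizers of $f$.

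\emph{Part (1).} By the block decomposition of the preceding Proposition, $M_d(y)$ has blocks $A,AW,W^TA,C$, where $A=M_{d-1}(y)$ is the corresponding principal submatrix, so that
\begin{equation}
\widetilde{M_d(y)}=\begin{pmatrix} I\\ W^T\end{pmatrix}A\begin{pmatrix} I & W\end{pmatrix}.\nonumber
\end{equation}
Hence $\widetilde{M_d(y)}\succeq 0$ (as $A$ is a principal submatrix of $M_d(y)\succeq 0$), its top-left entry equals $y_{(0,\dots,0)}=1$, and $\widetilde{M_d(y)}=\widetilde{\widetilde{M_d(y)}}$. By the hypothesis that $\widetilde{M_d(y)}$ is generalized Hankel, $\widetilde{M_d(y)}=M_d(y')$ for some $y'\in\R^{s_k}$ (with the entries of $y'$ above degree $2d$ copied from $y$, if needed), and then $M_d(y')$ is flat. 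Passing from $M_d(y)$ to $\widetilde{M_d(y)}$ only modifies the block $C$, i.e. the entries $y_\alpha$ with $|\alpha|=2d$, and by $f\in\R[\underline X]_{k-1}$ the corresponding coefficients $f_\alpha$ vanish, so $\sum_{|\alpha|\le k}f_\alpha y'_\alpha=\sum_{|\alpha|\le k}f_\alpha y_\alpha=P^*_k$. Therefore $y'$ is a feasible solution of $(P_k)$ attaining the value $P^*_k$, hence optimal, and its moment matrix $M_d(y')=\widetilde{M_d(y)}$ is flat; applying part~(2) to $y'$ gives $P^*=P^*_k$, the representation $\widetilde{M_d(y)}=M_d(y')=\sum_{i=1}^r\lambda_i V_dV_d^T(a_i)$ with $\lambda_i>0$, and that $a_1,\dots,a_r$ are minimizers of $f$.

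The main obstacle is the opening step of part~(2): converting the purely linear-algebraic flatness condition $\rk M_d(y)=\rk M_{d-1}(y)$ into the existence of a finite atomic representing measure. This is precisely the content of the Curto--Fialkow flat extension theorem --- repeated flat extensions up to $M_\infty(\hat y)$, together with the identification of $\ker M_\infty(\hat y)$ with a zero-dimensional real radical ideal --- which I would cite (see for instance \cite{lau} and \cite{mlq1}) rather than reprove. Everything else is bookkeeping with the block structure of moment matrices and a direct appeal to Proposition~\ref{motivation}(3); the only delicate point in part~(1) is the degree arithmetic, which is immediate when $k$ is even (the relevant case, since for odd $k$ the top-degree part of $f$ must already be absent for $(P_k)$ to be bounded below) and requires only minor care otherwise.
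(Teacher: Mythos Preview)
The paper does not actually prove this theorem in place; it simply refers to \cite[Corollary~7.3]{mlq1}. Your argument follows the standard route and matches the spirit of that reference (which is phrased via the GNS construction, an equivalent presentation of the Curto--Fialkow flat extension mechanism): flatness of $M_d(y)$ yields an atomic representation, and Proposition~\ref{motivation}(3) then gives $P^*=P^*_k$ together with the minimizers; the reduction of (1) to (2) via $\widetilde{M_d(y)}$ is also the intended idea.

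One caveat on the ``minor care'' you mention at the end. For odd $k$ one has $2d=k-1$, so the hypothesis $f\in\R[\underline X]_{k-1}=\R[\underline X]_{2d}$ does \emph{not} force the degree-$2d$ coefficients of $f$ to vanish; replacing the $C$ block then may change the objective value, so $y'$ need not be optimal and Proposition~\ref{motivation}(3) does not apply directly. Your suggested fix --- pass to $k'=k-1$, which is even --- does not close the gap either, since applying part~(1) at level $k'$ would require $f\in\R[\underline X]_{k'-1}=\R[\underline X]_{2d-1}$, strictly stronger than what is assumed. So the odd-$k$ instance of (1) is not dispatched by bookkeeping alone; it needs either the stronger hypothesis $f\in\R[\underline X]_{2d-1}$ or a genuinely different argument (as in \cite{mlq1}, working directly with the truncated multiplication operators rather than through this degree-counting reduction). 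For even $k$ --- the case that actually arises in all the paper's applications --- your proof is complete and correct.
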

\begin{proof}
The proof is in  \cite[Corollary 7.3]{mlq1}.
\end{proof}

\section{Main ideas in the Algorithm} 
Let $(P)$ the polynomial optimization problem defined in \eqref{Popw} and $f\in\R[\underline{X}]_{k}$. Given $y\in\R^{s_{k}}$ an optimal solution of $(P_{k})$, set $d:=\left\lfloor{\frac{k}{2}}\right\rfloor$ and $M:=M_{d}(y)$. It is not always the case that $M$ is flat or it is not always the more general case that $\widetilde{M}$ is a generalized Hankel matrix, in this case, in order to find the minimum $P^{*}$ and minimizers, we could try to increase $k$ and solve again the Moment relaxation and hope that we get an optimal solution with $M$ flat or $\widetilde{M}$ generalized Hankel. However the dimension of the problem could increase considerably and one frequently runs into numerical problems. Therefore in the Algorithm \ref{upper} we try to modify a little bit the optimal solution $y$ to get a flat solution or a solution close to be flat, this way we try to avoid to increase $k$. A first try to get a flat optimal solution of $(P_{k})$ would be to add linear constraints into the Moment relaxation in order to restrict our set of feasible solutions to a set of flat feasible solutions or at least "close" to be flat. Let me explain shortly why this is in principle, a hard problem. As we have mentioned before, a first approach would be to try to describe the following program:
\begin{align}
& \text{ minimize }\nonumber
\sum_{|\alpha|\leq k} f_{\alpha}y_{\alpha} \text{ subject to: } \\ \nonumber
&
M_{d}(y)\succeq 0, \ \ y_{\left(0,\ldots,0\right)}=1, \ \ \rk M_{d}(y)=\rk M_{d-1}(y) \nonumber
\end{align}
as a positive semidefinite program but $\rk M_{d}(y)=\rk M_{d-1}(y)$ if and only if $M_{i}\in \spn\left\langle M_{1},\ldots,M_{s_{d-1}} \right\rangle$ for all $i\in\{ s_{d-1}+1,\ldots,s_{d}\}$. However we can not add the constraints:
\begin{align}\label{condition}
M_{i}= \sum_{j=1}^{s_{d-1}}a_{j}^{i}M_{j} & \text{ for some }a_{1}^{i},\ldots,a_{s_{d-1}}^{i}\in\R \\ 
 & \text{ for all }i\in\{s_{d-1}+1,\ldots, s_{d}\}\nonumber
\end{align} 

to our Moment relaxation since this condition is not linear due to the $a_{i}$ and the entries of the matrix are decision variables or unknows, and this can not be written, at least not in any obvious way, as a semidefinite program. In the same way, the program:
\begin{align}
& \text{ minimize } \nonumber
\sum_{|\alpha|\leq k} f_{\alpha}y_{\alpha} \text{ subject to: } \\ \nonumber
&
M_{d}(y)\succeq 0 \text{, }y_{\left(0,\ldots,0\right)}=1, \text{ and }\widetilde{M_{d}(y)}\text{ is generalized Hankel} \nonumber
\end{align}
is not a positive semidefinite program due to that $W_{M_{d}(y)}^{T}M_{d-1}(y)W_{M_{d}(y)}$ is not a linear matrix since the entries of $M_{d-1}(y)$ and the entries of $W_{M_{d}(y)}$ are decision variables. Moreover to solve  polynomial optimization problems without constraints already for degree $4$ polynomials is NP hard \cite{NP}, so it is reasonable to expect that to convert these programs into a  semidefinite program is hard. Nevertheless, we can modify a little bit the optimal solution $y$ into $y_{0}\in\R^{s_{k}}$ in such a way that $y_{0}$ is feasible solution of $(P_{k})$ and $M_{d}(y_{0})$ is approximately flat. Since $y_{0}$ is a feasible solution of $(P_{k})$ the inequality $\sum_{|\alpha|\leq k }f_{\alpha}(y_{0})_{\alpha}\geq P_{k}^{*}$ holds. Moreover if $M_{d}(y_{0})$ is flat then by \ref{motivation} $(2)$ we know that $\sum_{|\alpha|\leq 2d }f_{\alpha}(y_{0})_{\alpha}\geq P^{*}$. More precisely in this last case it holds that:
\begin{equation}
 P^{*}\in [P^{*}_{k},\sum_{|\alpha|\leq k }f_{\alpha}(y_{0})_{\alpha}]
\end{equation}

\begin{Reminder}\label{nube}
Let us consider the following polynomial optimization problem, called the Least Squares Problem:
\begin{equation}\label{Pop}
\begin{aligned}
(P_{A,b}) \text{  minimize  } ||Ax-b||^{2}_{2}, \text{ subject to } x\in \R^{n}
\end{aligned}
\end{equation}
where $A\in\R^{m\times n}$, $b\in\R^{m}$ are given and $m>n$. Minimizers of this problem are called a least squares approximate solutions. Suppose the matrix $A^{T}A$ is non singular then the unique solution, denoted by $x_{A,b}^{*}$,  of the least squares problem is given by:
\begin{equation}\label{vectorprojection}
x_{A,b}^{*}=(A^{T}A)^{-1}A^{T}b
\end{equation}
\end{Reminder}

For a proof of the Reminder \ref{nube} and more details about the topic we refer the reader to \cite{BoyVan} and references therein.

Given a polynomial optimization without constraints $(P)$ $\eqref{Popw}$, with $f\in\R[\underline{X}]_{k}$ and an optimal solution $y\in\R^{s_{k}}$ of the Moment relaxation $(P_{k})$, the next step in the algorithm is to build the closest matrix to $M:=M_{d}(y)$, let us denoted it by $B_{M}\in\R^{s_{d}\times s_{d}}$, with:
\begin{equation}
\rk(B_{M_{1}}\cdots B_{M_{s_{d}}})=\rk(B_{M_{1}}\cdots B_{M_{s_{d-1}}})\nonumber
\end{equation}
that is to say the first $s_{d-1}$ columns of $B_{M}$ are the same as the first $s_{d-1}$ columns of $M$, the last $r_{d}$ columns of $B_{M}$ belong to the real linear span of this columns and $B_{M}$ is the closed matrix to $M$ in the sense that the column $B_{M_{j}}$ for all $j\in\{s_{d-1}+1,\ldots,s_{d}\}$ is the closest vector to $M_{j}$ which lies in the real linear span $\left\langle B_{M_{1}},\ldots,B_{M_{s_{k}}}\right\rangle$, that is to say $B_{M_{j}}$ is the orthogonal projection of $M_{j}$ into  $\left\langle B_{M_{1}},\ldots,B_{M_{s_{k}}}\right\rangle$. Then  $B_{M_{j}}=M_{d-1}(y)x^{*}_{M_{d-1}(y),M_{j}}$ for all $j\in\{s_{d-1}+1,\ldots,s_{d}\}$ where $x^{*}_{M_{d-1}(y),M_{j}}$ is the least squares approximate solution of $(P_{M_{d-1}(y),M_{j}})$.\\

The matrix $B_{M}$ holds the desired condition in the rank \eqref{condition}, however it is not necessarily positive semidefinite, not even symmetric and also not generalized Hankel, that is to say is not a feasible solution of $(P_{k})$. So now we look for $y_{0}\in\R^{s_{k}}$, such that $E\in\R$ is the smallest possible in the following inequality:
\begin{equation}\label{amy}
||M_{d}(y_{0})-B_{M}||^{2}\leq E||M-B_{M}||^{2}             
\end{equation}
Setting $A_{M}:=M_{d}(y_{0})$, we will solve the following program:
\begin{align}
(P_{M}) \text{   minimize } \nonumber E  & \text{ subject to }\\  \nonumber
& E \in\R \text{ and } ||A_{M}-B_{M}||^{2}\leq E||M-B_{M}||^{2}  \nonumber
\end{align}
Note that in the program $(P_{M})$ the decision variables $y_{0}\in\R^{s_{k}}$ and $E\in\R$.  With the condition \eqref{amy} we attempt to simultaneously control the rank of $A_{M}$ by minimizing the distance from $A_{M}$ to $B_{M}$ and at the same time we get a matrix with lower or equal rank than the original matrix $M$, since note the inequality \eqref{amy} holds taking $E:=1$ and $A_{M}:=M$, there exists always a feasible solution.  The Schur complement, defined in \ref{Schur}, will enable us to show that this condition is equivalent to the positive semidefiniteness of a matrix with linear entries and therefore we can conclude that $(P_{M})$ is a positive semidefinite program, possible to solve efficiently.

\begin{defi}\label{Schur}
Let us consider a  matrix $X\in S\R^{m+l \times m+l}$ in block form:
\begin{equation}\label{schur}
X=\left(
\begin{array}{r|r}

A & B \\ \cline{1-2}
B^{T} & C  
\end{array}
\right)
\end{equation}
with $A\in\R^{m\times m}$, $B\in\R^{m\times l}$, $C\in\R^{l \times l}$. Assume $A$ is non-singular. Then the matrix $C-B^{T}A^{-1}B$ is called the Schur complement of $A$ in $X$.
\end{defi}

\begin{lem}\label{algoritmo2}
Let $X\in\mathcal S^{m\times m}$ be in block form \eqref{schur}, where $A$ is non-singular. Then,
\begin{equation}
X\succeq 0 \iff A\succeq 0\text{ and }C-B^{T}A^{-1}B\succeq 0\nonumber
\end{equation}
\end{lem}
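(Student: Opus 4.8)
The plan is to prove the classical Schur complement lemma by an explicit block $LDL^{T}$ factorization of $X$. Since $A$ is non-singular, the key observation is the identity
\begin{equation*}
\begin{pmatrix} A & B \\ B^{T} & C \end{pmatrix}
= \begin{pmatrix} I & 0 \\ B^{T}A^{-1} & I \end{pmatrix}
\begin{pmatrix} A & 0 \\ 0 & C - B^{T}A^{-1}B \end{pmatrix}
\begin{pmatrix} I & A^{-1}B \\ 0 & I \end{pmatrix},
\end{equation*}
which one verifies by direct multiplication; note the outer factors are transposes of each other (using $A = A^{T}$, so $(B^{T}A^{-1})^{T} = A^{-1}B$) and are invertible with determinant $1$. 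First I would record this factorization and write $P := \begin{pmatrix} I & A^{-1}B \\ 0 & I \end{pmatrix}$, so that $X = P^{T} \operatorname{diag}(A,\, C - B^{T}A^{-1}B)\, P$.

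Next I would use the standard fact that congruence by an invertible matrix preserves positive semidefiniteness: for any $v \in \R^{m+l}$, writing $w = Pv$, we have $v^{T}Xv = w^{T}\operatorname{diag}(A,\, C-B^{T}A^{-1}B)\, w$, and since $P$ is a bijection on $\R^{m+l}$, $v$ ranges over all of $\R^{m+l}$ exactly when $w$ does. Hence $X \succeq 0$ if and only if $\operatorname{diag}(A,\, C - B^{T}A^{-1}B) \succeq 0$. Finally, a block-diagonal symmetric matrix is positive semidefinite if and only if each diagonal block is: writing $w = (w_1, w_2)$ with $w_1 \in \R^{m}$, $w_2 \in \R^{l}$, one has $w^{T}\operatorname{diag}(A, S)w = w_1^{T}Aw_1 + w_2^{T}Sw_2$ where $S := C - B^{T}A^{-1}B$, and choosing $w_2 = 0$ (resp. $w_1 = 0$) isolates each term. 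Combining the two equivalences gives $X \succeq 0 \iff A \succeq 0 \text{ and } S \succeq 0$, which is the claim.

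I do not expect a genuine obstacle here; the only points requiring a little care are checking that the block factorization is correct (a routine multiplication) and confirming that $S = C - B^{T}A^{-1}B$ is symmetric, which follows from $C = C^{T}$ and $(B^{T}A^{-1}B)^{T} = B^{T}A^{-T}B = B^{T}A^{-1}B$ since $A$ is symmetric. One should also note explicitly that $\mathcal{S}^{m\times m}$ in the statement should read $S\R^{m+l \times m+l}$ to match the block sizes in \eqref{schur}; the argument is unaffected.
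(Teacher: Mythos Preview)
Your proof is correct: the block $LDL^{T}$ factorization together with the observation that congruence by an invertible matrix preserves positive semidefiniteness is exactly the standard route to this result, and all the checks you flag (validity of the factorization, symmetry of the Schur complement, the block-diagonal reduction) go through as stated.

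The paper itself does not give a proof of this lemma; it simply refers the reader to \cite[Lemma 1.7.6]{MoVa}. Your argument is almost certainly the same as (or a close variant of) what appears in that reference, so there is no genuine methodological difference to compare. Your remark about the dimension mismatch in the statement ($\mathcal{S}^{m\times m}$ versus $S\R^{(m+l)\times(m+l)}$) is also well taken.
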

\begin{proof}
For a proof of this lemma we refer the reader to \cite[Lemma 1.7.6]{MoVa}
\end{proof}
%

Therefore applying Lemma \ref{algoritmo2} we got that:

\begin{equation}
||A_{M}-B_{M}||^{2}\leq E||M-B_{M}||^{2}\iff\nonumber
\end{equation}

%

\begin{equation}\label{linear}
\left(
\begin{array}{c|c}
I_{s_{d}s_{d}}& \begin{array}{c} (A_{M})_{1,1}-(B_{M})_{1,1}\\ \vdots\\  (A_{M})_{s_{d},s_{d}}-(B_{M})_{s_{d},s_{d}} \end{array} \\ \cline{1-2}

\begin{array}{ccc}(A_{M})_{1,1}-(B_{M})_{1,1} & \cdots  & (A_{M})_{s_{d},s_{d}}-(B_{M})_{s_{d},s_{d}} \end{array} & E||M-B_{M}||^{2}
   \end{array}
    \right)\succeq 0
\end{equation}

The matrix \eqref{linear} is linear in the variables $E$ and the entries of the matrix $A_{M}$, that is in $y_{0}\in\R^{s_{k}}$. We could solve directly the semidefinite program $(P_{M})$ to get a lower bound of $P^{*}$. However in practice if instead we consider the following semidefinite program for $\lambda\in[0,1]$ fixed, we get better bounds:
\begin{align}
(P_{M,\lambda}) \text{   minimize } \nonumber E\lambda+(1-\lambda)\sum_{|\alpha|\leq k}f_{\alpha}y_{0,\alpha}  & \text{ subject to }\\  \nonumber
& E \in\R \text{ , } ||M_{k}(y_{0})-B||^{2}\leq E||M-B_{M}||^{2}\\  \nonumber
& M_{k}(y_{0})\succeq 0 
\end{align}
The optimal value of $(P_{M,\lambda})$ that is to say, the infimum over all:
\begin{equation}
(y_{0,\left(0,\ldots,0 \right)},\ldots,y_{0,\left(0,\ldots,k \right)})\in\R^{s_{k}}\text{ and }E\in\R\nonumber
\end{equation}
that ranges over all optimal solutions of $(P_{M,\lambda})$ is denoted by $P^{*}_{M,\lambda}$.

 \section{Algorithm 1 based on Moment relaxations}
\begin{algorithm}\label{upper}
  \KwIn{A polynomial optimization problem $(\textbf{P})$ \eqref{Pop} without constraints and an
	strategic $\lambda\in [0,1] $}
  \KwOut{An upper bound $U\geq\textbf{P}^{*}$ and if possible, points $\textbf{a}_{1},\ldots,\textbf{a}_{r}$ such that $f(\textbf{a}_{\textbf{i}})=U$.}\
  Compute a feasible solution of the Moment relaxation $(P_{\deg f})$ and denote it by $y\in\R^{s_{\deg f}}$. Set  $d:=\lfloor\frac{\deg{f}}{2}\rfloor$  and $\textbf{M}:=M_{d}(y)$. \;
Take a maximum linearly independent subset of the first $s_{d-1}$ columns of $\textbf{M}$ and denote it by $\{M_{1},\ldots,M_{l}\}$ and set $C:=(M_{1}\cdots M_{l})\in \R^{s_{d}\times l}$. \;
Define the folowing matrix using the definition of the solution of the least squares problem \eqref{vectorprojection},
$\textbf{B}_{\textbf{M}}:=(M_{1}\cdots M_{s_{d-1}}|Cx^{*}_{C,M_{s_{d-1}+1}}\cdots Cx^{*}_{C,M_{s_{d}}})$.\;
Finally define the following matrix:
\begin{equation}
\textbf{T}_{\textbf{M}}:=\left(
\begin{array}{c|c}
I_{s_{d}s_{d}}& \begin{array}{c} (A_{M})_{1,1}-(B_{M})_{1,1}\\ \vdots\\  (A_{M})_{s_{d},s_{d}}-(B_{M})_{s_{d},s_{d}} \end{array} \\ \cline{1-2}

\begin{array}{ccc}(A_{M})_{1,1}-(B_{M})_{1,1} & \cdots  & (A_{M})_{s_{d},s_{d}}-(B_{M})_{s_{d},s_{d}} \end{array} & E||M-B_{M}||^{2}
   \end{array}
    \right)\nonumber
\end{equation}
where the decision variables are $\textbf{E}$ and the entries of the matrix $\textbf{A}_{\textbf{M}}\in S\R^{s_{d}\times s_{d}}$. \;
Solve the following positive semidefinite program:
\begin{align}
(P_{M,\lambda}) \text{   minimize } \nonumber E\lambda+(1-\lambda)\sum_{|\alpha|\leq k}f_{\alpha}y_{0,\alpha}  & \text{ subject to }\\  \nonumber
& \textbf{A}_{\textbf{M}}=M_{d}(y_{0})\\  \nonumber
& E \in\R \text{ , } \textbf{T}_{\textbf{M}}\succeq 0\\  \nonumber
& A_{M}\succeq 0 
\end{align}
Note that here the decision variables are $E$ and the entries of the matrix $\textbf{A}_{\textbf{M}}\in S\R^{s_{d}\times s_{d}}$ which are $y_{0}\in\R^{s_{k}}$. \;
\If {$\textbf{A}_{\textbf{M}}$ is flat or $||\textbf{A}_{\textbf{M}}-\textbf{B}_{\textbf{M}} ||$ is small enough} { $\textbf{U}:=\sum_{|\alpha|\leq k}f_{\alpha}y_{0,\alpha}$. If possible extract $a_{1},\ldots a_{r}\in\R^{n}$ such that $f(a_{i})=\textbf{U}$ with \cite[Algorithm 1]{mlq1}.}
\Else{$\textbf{M}:=\textbf{A}_{\textbf{M}}$ and go to $\textbf{3}$} 
  \caption{Given $(P)$ \eqref{Popw}, finding an upper bound $U$ of $P^{*}$, i.e $U\geq P^{*}$ and if possible minimizers or potential minimizers  }
  \end{algorithm}
	
	\begin{rem}
We do not know if the algorithm ever terminates. In the examples we did, if the algorithm took too much time, we interrupted the algorithm even if the matrix $\textbf{A}_{\textbf{M}}$ was not flat.	Note also that since $y_{0}$ is a feasible solution of $P_{k}$, $U\geq P^{*}_{k}$ and in case $\textbf{A}_{\textbf{M}}$ is flat we can even conclude by \ref{motivation} $(2)$ that:
	\begin{equation}
	P^{*}\in [P^{*}_{k}, \textbf{U}]\nonumber
	\end{equation}
	\end{rem}

\section{Numerical Results of Algorithm 1}
\begin{ej} The \textit{Motzkin polynomial}, $X_{1}^{4}X_{2}^{2}+X_{1}^2X_{2}^{4}-3X_{1}^2X_{2}^2+1$, is nonnegative but not sum of squares (see \cite[Proposition 1.2.2]{mar} for a proof). Let us consider the following polynomial optimization problem:
\begin{equation*}
\begin{aligned}
& {\text{minimize}}
& & f(x)=x_{1}^{4}x_{2}^{2}+x_{1}^2x_{2}^{4}-3x_{1}^2x_{2}^2+1 \\
& \text{subject to}
& & x\in\R^{2}\\
\end{aligned}
\end{equation*}
Since we know that the Motzkin polynomial is nonnegative and $f(\pm 1,\pm 1)=0$ is not difficult to see that $P^{*}=0$. Let us use this polynomial to see how the Algorithm \ref{upper} works. An optimal solution $z\in\R^{s_{6}}$ of the Moment relaxation $(P_{6})$, has the following Moment matrix:
{\scriptsize
\begin{align} \nonumber
M_{3}(z)= \begin{blockarray}{ccccccc}
\text{ } & 1 & X_{1} & X_{2} & X_{1}^2 & X_{1}X_{2} & X_{2}^2 \\
\begin{block}{c(cccccc}
1&1.00&0.00&0.00&5300.32&0.00&5300.31\\
X_{1} &0.00&5300.32&0.00&0.00&0.00&0.00\\
X_{2} &0.00&0.00&5300.31&0.00&0.00&0.00\\
X_{1}^{2}&5300.32&0.00&0.00&57120303.73&0.00&2966.08\\
X_{1}X_{2}&0.00&0.00&0.00&0.00&2966.08&0.00\\
X_{2}^{2}&5300.31&0.00&0.00&2966.08&0.00&57120215.99\\
X_{1}^{3}&0.00&57120303.73&0.00&2.14&0.00&0.00\\
X_{1}^{2}X_{2}&0.00&0.00&2966.08&0.00&0.00&0.00\\
X_{1}x_{2}^{2}&0.00&2966.08&0.00&0.00&0.00&0.00&\\
X_{2}{3}&0.00&0.00&57120215.99&0.00&0.00&-24.83\\
\end{block}
   \end{blockarray}\\ \nonumber
\begin{blockarray}{ccccccc}
X_{1}^{3} & X_{1}^{2}X_{2} & X_{1}X_{2}^{2} & X_{2}^{3} & \text{ }\\
\begin{block}{cccc)ccc}
0.00&0.00&0.00&0.00&1\\
57120303.73&0.00&2966.08&0.00&X_{1}\\
0.00&2966.08&0.00&57120215.99&X_{2}\\
-2.14&0.00&0.00&0.00&X_{1}^{2}\\
0.00&0.00&0.00&0.00&X_{1}X_{2}\\
0.00&0.00&0.00&-24.83&X_{2}^{2}\\
2272816741819.88&0.00&2491.80&0.00&X_{1}^{3}\\
0.00&2491.80&0.00&2491.80&X_{1}^{2}X_{2}\\
2491.80&0.00&2491.80&0.00&X_{1}X_{2}^{2}\\
0.00&2491.80&0.00&2272810890195.67&X_{2}^{3}\\
\end{block}
   \end{blockarray}\\ \nonumber
\end{align}}
This matrix is neither flat nor $\widetilde{M_{3}(z)}$ is a generalized Hankel matrix so we can not conclude optimality \ref{above}, in fact the optimal value is $P^{*}_{6}=-\infty<<P^{*}=0$ stricly smaller than the minimum. Since the entries of the matrix $M_{3}(z)$ are very far to each other and working with this matrix could bring us numerical problems, so instead we take a random feasible solution of $(P_{6})$, $y\in\R^{s_{6}}$ with moment matrix $M:=M_{3}(y)$. Compute, as in the Algorithm \ref{upper}, the matrices $B_{M}$ and $T_{M}$ and solve the semidefinite program $(P_{M,1/60})$. We get after $66$ iterations the optimal solution $y_{0}\in\R^{s_{6}}$  with Moment matrix:

{\scriptsize\begin{align}A_{M}:=M_{3}(y_{0})\nonumber= \begin{blockarray}{ccccccccccc}
\text{ } & 1 & X_{1} & X_{2} & X_{1}^2 & X_{1}X_{2} & X_{2}^2 & X_{1}^{3} & X_{1}^{2}X_{2} & X_{1}X_{2}^{3} & X_{2}^{3} \\
\begin{block}{c(cccccccccc)}
1 &1.00&0.00&0.00&1.02&0.00&1.02&0.00&0.00&0.00&0.00\\
X_{1}&0.00&1.02&0.00&0.00&0.00&0.00&1.04&0.00&1.04&0.00\\
X_{2}&0.00&0.00&1.02&0.00&0.00&0.00&0.00&1.04&0.00&1.04\\
X_{1}^{2}&1.02&0.00&0.00&1.04&0.00&1.04&0.00&0.00&0.00&0.00\\
X_{1}X_{2}&0.00&0.00&0.00&0.00&1.04&0.00&0.00&0.00&0.00&0.00\\
X_{2}^{2}&1.02&0.00&0.00&1.04&0.00&1.04&0.00&0.00&0.00&0.00\\
X_{1}^{3}&0.00&1.04&0.00&0.00&0.00&0.00&1.07&0.00&1.07&0.00\\
X_{1}^{2}X_{2}&0.00&0.00&1.04&0.00&0.00&0.00&0.00&1.07&0.00&1.07\\
X_{1}X_{2}^{2}&0.00&1.04&0.00&0.00&0.00&0.00&1.07&0.00&1.07&0.00\\
X_{2}^{3}&0.00&0.00&1.04&0.00&0.00&0.00&0.00&1.07&0.00&1.07\\
\end{block}
   \end{blockarray}\\ \nonumber
	\end{align}}

Moreover we get that $U:=\sum_{|\alpha|\leq 6}f_{\alpha}y_{0,\alpha}=0.00156$. It turns out that this matrix is flat and therefore is an upper bound of the minimum $P^{*}\leq U$. Moreover we can extract the points $a_{i}$ such that $f(a_{i})=U$ with the method \cite[Algorithm 1]{mlq1}. Applying this algorithm we get the factorization:
\begin{align}
A_{M}=\nonumber&\frac{1}{4}\cdot V_{3}V_{3}^{T}(-1.0109,-1.0109)+\frac{1}{4}\cdot V_{3}V_{3}^{T}(1.0109,-1.0109)+\\
&\frac{1}{4}\cdot V_{3}V_{3}^{T}(-1.0109,1.0109)+\frac{1}{4}\cdot V_{3}V_{3}^{T}(1.0109,1.0109)\nonumber
\end{align}
Therefore $f(\pm 1.0109,\pm 1.0109)=U$, and this four points are close to the minimizers of the Motzkin polynomial that are $(\pm 1, \pm 1)$.
The table \ref{table:1} shows the different data that we get by solving the semidefinite program $(P_{M,\lambda})$ for different values of $\lambda\in [0,1]$. Note that if the algorithm \ref{upper} did  not seem to reach a flat $A_{M}$ then we manually stopped the algorithm when took more than $3$ minutes. Note that for $\lambda=\frac{3}{4}$ we get an approximately flat solution and we extract potential minimizers by block-simultaneous diagonalization the truncated multiplication operators defined in \cite[Definition 4.5]{mlq1} and taking the blocks of dimension $1$. The algorithm for the block simultaneous diagonalization is described in \cite[Algorithm 4.1]{japoneses}.

\begin{table}
\begin{tabular}{ |c|c|c|c|c|c|c| } 
\hline
 & $U$ & Flat & $||A_{M}-B_{M}||$ & Potential Minimizers & Iterations & Time \\ \cline{1-7}
$\lambda=\frac{1}{1000}$ & $-209.9332$& No& $1.8995\cdot 10^{7}$&-&$73$&$2^{M}14^{S}$\\ \cline{1-7}
 $\lambda=\frac{1}{100}$ & $-48.50$  & No & $1.584\cdot 10^{5}$ & - & $73$ &  $1^{M}42^{S}$ \\ \cline{1-7}
 $\lambda=\frac{1}{60}$ & $0.00156$ & Yes & $9.16135$ & $(\pm 1.0109,\pm 1.0109)$ & $66$ &  $1^{M}4^{S}$ \\ \cline{1-7}
 $\lambda=\frac{1}{4}$ & $0.0650$ & No & $2.0449\cdot 10^{-4}$ & - & $66$ &  $1^{M}15^{S}$ \\ \cline{1-7}
$\lambda=\frac{1}{2}$ & $0.2537$ & No & $9.0867\cdot 10^{-4}$ & - & $66$ &  $1^{M}20^{S}$ \\ \cline{1-7}
 $\lambda=\frac{3}{4}$  & $0.3543$  & Approximately&  $0.0015$ & $(\pm 0.9960,\pm 0.9960)$ & $74$ & $1^{M}5^{S}$ \\ \cline{1-7}
 $\lambda=1$  & $0.2870$ &  No & 0.0013 & - &  $115$ & $2^{M}$ \\ \cline{1-7}
\hline 
\end{tabular}
\caption{Data of the Algorithm \ref{upper} applyed to the Motzkin polynomial}
\label{table:1}
\end{table}

\end{ej}

\begin{ej} Let us consider the problem of minimizing the following polynomial taken from \cite[Example 3]{las}:
\begin{equation*}
\begin{aligned}
& {\text{minimize}}
& & f(x)=x_{1}^{2}x_{2}^{2}(x_{1}^{2}+x_{2}^{2}-1)\\
& \text{subject to}
& & x\in\R^{2}\\
\end{aligned}
\end{equation*}
Since it holds $-1\leq X_{1}^2+X_{2}^{2}-1$ it also holds that $-1\leq X_{1}^{2}X_{2}^{2}(X_{1}^{2}+X_{2}^{2}-1)$, or what is the same $f+1$ is nonnegative. $f+1$ is also not a sum of squares as one can easily check with the so called \textit{Gram-matrix method}, see \cite[Lemma 3.8]{lau}. In fact one can compute with Calculus that $P^{*}=f(\pm \frac{1}{\sqrt{3}},\pm\frac{1}{\sqrt{3}})=\frac{1}{27}$. Let us see how the algorithm works with this polynomial $f$. An optimal solution $z\in\R^{s_{6}}$ of the Moment relaxation $(P_{6})$, has the following Moment matrix:
aqui
{\scriptsize
\begin{align} \nonumber
M_{3}(z)= \begin{blockarray}{cccccccc}
\text{ } & 1 & X_{1} & X_{2} & X_{1}^2 & X_{1}X_{2} & X_{2}^2 \\
\begin{block}{c(ccccccc}
1&1.00&0.00&0.00&2270.73&-0.00&2270.73\\
X_{1}&0.00&2270.73&-0.00&0.00&0.00&0.00&\\
X_{2}&0.00&-0.00&2270.73&0.00&0.00&0.00\\
X_{1}^{2}&2270.73&0.00&0.00&10462942.40&0.00&411.34\\
X_{1}X_{2}&-0.00&0.00&0.00&0.00&411.34&-0.00\\
X_{2}^{2}&2270.73&0.00&0.00&411.34&-0.00&10462943.55\\
X_{1}^{3}&0.00&10462942.40&0.00&2.13&0.00&0.00\\
X_{1}^{2}X_{2}&0.00&0.00&411.34&0.00&0.00&0.00\\
X_{1}X_{2}^{2}&0.00&411.34&-0.00&0.00&0.00&0.00\\
X_{2}^{3}&0.00&-0.00&10462943.55&0.00&0.00&2.50 \\ 
\end{block}
   \end{blockarray}\\ \nonumber
\begin{blockarray}{ccccccc}
X_{1}^{3} & X_{1}^{2}X_{2} & X_{1}X_{2}^{2} & X_{2}^{3} & \text{ }\\
\begin{block}{cccc)ccc}
0.00&0.00&0.00&0.00&1\\
10462942.40&0.00&411.34&-0.00&X_{1}\\
0.00&411.34&-0.00&10462943.55&X_{2}\\
2.13&0.00&0.00&0.00&X_{1}^{2}\\
0.00&0.00&0.00&0.00&X_{1}X_{2}\\
0.00&0.00&0.00&2.50&X_{2}^{2}\\
165266760920.34&0.00&116.88&0.00&X_{1}^{3}\\
0.00&116.88&0.00&116.88&X_{1}^{2}X_{2}\\
116.88&0.00&116.88&-0.00&X_{1}X_{2}^{2}\\
0.00&116.88&-0.00&165266795981.98&X_{2}^{3}\\
\end{block}
   \end{blockarray}\\ \nonumber
\end{align}}

This matrix is neither flat nor $\widetilde{M_{3}(z)}$ is a generalized Hankel matrix so we can not conclude optimality \ref{above}, in fact the value of this optimal solution is $P^{*}_{6}=-177.5859<<P^{*}=-\frac{1}{27}\approx -0.0370$ stricly smaller than the minimum. According to the Algorithm \ref{upper} we compute the matrices $B_{M}$, $T_{M}$ associated to a feasible solution $M$ and finally we solve the semidefinite program $(P_{M,\frac{1}{100}})$ and we get the following matrix:

{\scriptsize\begin{align}A_{M}:=M_{3}(y_{0})\nonumber= \begin{blockarray}{ccccccccccc}
\text{ } & 1 & X_{1} & X_{2} & X_{1}^2 & X_{1}X_{2} & X_{2}^2 & X_{1}^{3} & X_{1}^{2}X_{2} & X_{1}X_{2}^{3} & X_{2}^{3} \\
\begin{block}{r(rrrrrrrrrr)}
1&1.00&-0.00&-0.00&0.41&-0.00&0.41&0.00&0.00&0.00&0.00\\
X_{1}&-0.00&0.41&-0.00&0.00&0.00&0.00&0.17&-0.00&0.16&-0.00\\
X_{2}&-0.00&-0.00&0.41&0.00&0.00&0.00&-0.00&0.16&-0.00&0.17\\
X_{1}^{2}&0.41&0.00&0.00&0.17&-0.00&0.16&-0.00&-0.00&0.00&0.00\\
X_{1}X_{2}&-0.00&0.00&0.00&-0.00&0.16&-0.00&-0.00&0.00&0.00&-0.00\\
X_{2}^{2}&0.41&0.00&0.00&0.16&-0.00&0.17&0.00&0.00&-0.00&-0.00\\
X_{1}^{3}&0.00&0.17&-0.00&-0.00&-0.00&0.00&0.07&-0.00&0.07&-0.00\\
X_{1}^{2}X_{2}&0.00&-0.00&0.16&-0.00&0.00&0.00&-0.00&0.07&-0.00&0.07\\
X_{1}X_{2}^{2}&0.00&0.16&-0.00&0.00&0.00&-0.00&0.07&-0.00&0.07&-0.00\\
X_{2}^{3}&0.00&-0.00&0.17&0.00&-0.00&-0.00&-0.00&0.07&-0.00&0.07\\
\end{block}
   \end{blockarray}\\ \nonumber
	\end{align}}
We diagonalize this matrix to compute the rank and we get that the  eigenvalues are:
\begin{equation}
\{1.3284, 0.5383, 0.58383,  0.1630, 0.0024, 0.0008, 0.0008, 0.0000, 0.0000, 0.0000 \}\nonumber
\end{equation}
what implies, after rounding to the fourth decimal, that the $\rk A_{M}=6$ and therefore $A_{M}$ is flat and since we got that $U=\sum_{|\alpha|\leq 6}f_{\alpha}y_{0,\alpha}=-0.0305$ then we can deduce:
\begin{equation}
P^{*}\in [-177.5859,-0.0305 ]\nonumber
\end{equation}
 So we proceed to extract minimizers with the Algortihm defined in \cite[Algorithm 1]{mlq1} and we get that:
\begin{align}
A_{M}= & \nonumber  0.2501\cdot V_{3}V_{3}^{T}(-0.6354,-0.6354)+ 0.2499\cdot V_{3}V_{3}^{T}(-0.6354,0.6354)\\ \nonumber
&+0.2499\cdot V_{3}V_{3}^{T}(-0.6354,0.6354)+ 0.2501\cdot V_{3}V_{3}^{T}(0.6354,0.6354) \nonumber
\end{align}
In conclusion we get that:
\begin{equation}
 f(\pm{0.6354},\pm{0.6354})=-0.0305\geq P^{*}=-0.0370=f(\pm{0.5774},\pm{0.5774})\nonumber
\end{equation}
The table \ref{table:2G} shows the different data that we get by solving the semidefinite program $(P)_{M,\lambda}$ for different values of $\lambda\in [0,1]$. 

\begin{table}
\begin{tabular}{ |c|c|c|c|c|c|c| } 
\hline
 & $U$ & Flat & $||A_{M}-B_{M}||$ & Potential Minimizers & Iterations & Time \\ \cline{1-7}
 $\lambda=\frac{1}{1000}$ & $-4.4169$ & No & $7.8542\cdot10^{4}$ & - & $76$ &  $2^{M}30^{S}$ \\ \cline{1-7}
 $\lambda=\frac{1}{100}$ & $-0.0305$  & Aproximately & $7.928\cdot 10^{-4}$ & $(\pm 0.6354,\pm 0.6354)$ & $85$ &  $3^{M}3^{S}$ \\ \cline{1-7}
$\lambda=\frac{1}{60}$ & $-0.0255$ & Yes & $9.9708\cdot 10^{-5}$ & $(\pm 0.6566 ,\pm 0.6566 )$ & $76$ &  $2^{M}50^{S}$ \\ \cline{1-7}
 $\lambda=\frac{1}{4}$ & $0.0802$ & No & $0.0627$ & - & $50$ &  $1^{M}10^{S}$ \\ \cline{1-7}
 $\lambda=\frac{1}{2}$  & $0.1634$  & Yes &  $9.5965\cdot 10^{-5}$ & $(\pm 0.8233,\pm 0.8233)$ & $123$ & $3^{M}$ \\ \cline{1-7}
 $\lambda=\frac{3}{4}$ & $0.2399$ & No & $0.0430$ & - & $50$ &  $1^{M}20^{S}$ \\ \cline{1-7}
 $\lambda=1$  & $0.1331$ &  No & 0.0103 & - &  $84$ & $3^{M}11^{S}$ \\ \cline{1-7}
\hline 
\end{tabular}
\caption{Data of the Algorithm \ref{upper} for the polynomial $x_{1}^{2}x_{2}^{2}(x_{1}^{2}+x_{2}^{2}-1)$}
\label{table:2G}
\end{table}
\end{ej}

\section{Algorithm 2 based on the Nie, Demmel and Sturmfels method}

In the Algorithm \ref{upper} the starting matrix $M:=M_{d}(y)$ was a Generalized Hankel matrix positive semidefinite associated to a feasible solution $y\in\R^{s_{d}}$ of the Moment relaxation $(P_{\text{deg}f})$. If instead we start with a matrix associated to an optimal solution of the NDS relaxation, see the definition in \ref{NDS} taken from \cite{NDS}. Then by minimizing $E$ in \eqref{amy} we attempt to simultaneously minimize the distance of the new optimal solution $A$ to $M$ and at the same time we try to control the rank of the matrix $A$. In this section we see some experimental examples using this technique. 

For $p\in\R[\underline{X}]_{k}$ denote $d_{p}:=k-\deg{p}$  and consider the following vector: 
\begin{equation}\label{localizing}
\begin{aligned}
    pV_{d_{p}}:= (&p1,pX_{1}, pX_{2}, \ldots,  pX_{n},pX_{1}^2,pX_{1}X_{2},\ldots,pX_{1}X_{n},pX_{2}^2,\\
		&pX_{2}X_{3},\ldots,pX_{2}^2,\ldots,pX_{1}^{d_{p}},\ldots,pX_{n-1}X_{n}^{d_{p}-1},pX_{n}^{d_{p}})^{T} 
\end{aligned}
\end{equation}

\begin{defi}
For  $p\in\R[\underline{X}]_{k}$ the localizing vector of $p$ of degree $k$ is the vector  resulting from substituting every monomial $\underline{X}^{\alpha}$ such that $|\alpha|\leq k$ in \eqref{localizing} for a new variable $Y_{\alpha}$. We denote this vector by $V_{k,p}\in\R[\underline{Y}]_{1}^{s_{d_{p}}}$.
\end{defi}

Providing that $P^{*}=f(x)$ for some $x\in\R^{n}$, by Calculus we know that the local and global minima are contained in the real gradient variety:
\begin{equation}
\mathcal{V}_{f}:=\{u\in\R^{n}\text{ }|\text{ } \nabla f(u)=0 \}\nonumber
\end{equation}

In \cite{NDS}, it is considered to add to the problem \eqref{explanationauxiliar} the constraints:
\begin{equation}
p\frac{\partial f}{\partial X_i}(x)=0\text{ for all } p\in\R[\underline{X}]_{2d-\text{deg}f+1}.\nonumber
\end{equation}
namely:
\begin{align}
 \text{   minimize } \nonumber  f(x)  &\text{ subject to }\\   \nonumber
& x \in\R^{n},\\ \nonumber
& p(x)^{2}\geq 0 \text{ for all }p\in\R[\underline{X}]_{d}\text{ and}  \\ \nonumber
& p\frac{\partial f}{\partial X_i}(x)=0\text{ for all } p\in\R[\underline{X}]_{2d-\text{deg}f+1}. \nonumber
\end{align}

After linearization, or in other words, after substitute every monomial  $\underline{X}^{\alpha}$  for a new variable $Y_{\alpha}$,  this turned out into the following hierarchy of semidefinite programs.

\begin{dyn}
Let $(P)$ be a polynomial optimization problem  as in \eqref{Popw} such that there exists $x\in\R^{n}$ with $P^{*}=f(x)$ and 
let $k\in$ $\N_{0}\cup\{\infty\}$ such that $f\in\R[\underline{X}]_{k}$. We call the $\text{NDS}$ relaxation  of $(P)$ of degree $k$ to the following semidefinite optimization problem:
\begin{align}\label{NDS}
(P_{k,\text{NDS}}) \text{ minimize } \sum_{|\alpha|\leq k} f_{\alpha}y_{\alpha}\nonumber & \text{ subject to: } \\
& y_{\left(0,\ldots,0\right)}=1,\\  \nonumber
& M_{\lfloor\frac{\deg{k}}{2}\rfloor}(y)\succeq 0 \text{ and}\\  \nonumber
& V_{k,\frac{\partial f}{\partial X_{i}}}(y)=0\text{ for all } i\in\{1,\ldots,n\} \nonumber
\end{align}
the optimal value of $(P_{k,\text{NDS}})$ that is to say, the infimum over all
\begin{equation}
 y=(y_{\left(0,\ldots,0 \right)},\ldots,y_{\left(0,\ldots,k \right)})\in\R^{s_{k}}\nonumber
\end{equation}
that ranges over all feasible solutions of $(P_{k,\text{NDS}})$ is denoted by $P^{*}_{k,\text{NDS}}\in\{-\infty\}\cup\R\cup\{\infty\} $.
\end{dyn}

Let us remember a few properties about the convergence of this hierarchy of relaxations. For all the details about the convergence of the $\text{NDS}$ relaxation hierarchy we refer to the reader to \cite{NDS}. 

\begin{prop}\label{calor}
Let $(P)$ be a polynomial optimization problem as in $\eqref{Popw}$ and let $k\in\N_{0}\cup\{ \infty \}$ such that $f\in\R[\underline{X}]_{k}$. Set  $d:=\lfloor\frac{\deg{k}}{2}\rfloor$. The following holds:
\begin{enumerate}
\item $P^{*}\geq\ldots\geq P^{*}_{k+1,\text{NDS}}\geq P^{*}_{k,\text{NDS}}$
\item  Every matrix of the form:
\begin{equation}
 M=\sum_{i=1}^{N}\lambda_{i}V_{d}V_{d}^{T}(a_{i})\in S\R^{s_{d} \times s_{d}}\nonumber
 \end{equation}
  with $a_{i}\in \mathcal{V}_{f}$ for all $i\in\{1,\ldots,N\}$ and with $\sum_{i=1}^{N}\lambda_{i}=1$ is the Moment matrix of a feasible solution $y\in\R^{s_{k}}$ of $(P_{k,\text{NDS}})$ and $\sum_{|\alpha|\leq k} f_{\alpha}y_{\alpha}\geq P^{*}$.
  \item If $(P_{k,\text{NDS}})$ has an optimal solution $y\in\R^{s_{k}}$ such that there exists $N\in\N$ and $a_{1},\ldots,a_{N}\in \mathcal{V}_{f}$ and $\lambda_{1}>0,\ldots,\lambda_{N}>0$ such that:
	\begin{equation}
	M_{d}(y)=\sum_{i=1}^{N}\lambda_{i}V_{d}V_{d}^{T}(a_{i})\in S\R^{s_{d} \times s_{d}}\nonumber
	\end{equation}
	Then $P^{*}=P^{*}_{k}$ and $a_{1},\ldots,a_{r}$ are minimizers of $f$.
\end{enumerate}
\end{prop}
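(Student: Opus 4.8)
The plan is to prove Proposition \ref{calor} as the exact analogue, for the gradient variety $\mathcal{V}_{f}$, of Proposition \ref{motivation}: the three items have the same logical structure, and the only new feature of the NDS relaxation is the family of localizing identities $V_{k,\partial f/\partial X_{i}}(y)=0$, which I will show is automatically satisfied by any truncated moment sequence all of whose atoms lie on $\mathcal{V}_{f}$. Accordingly I would first prove $(2)$, then read off the nontrivial inequality $P^{*}\ge P^{*}_{k,\text{NDS}}$ of $(1)$ as the special case $N=1$ of $(2)$, prove the chain $P^{*}_{k+1,\text{NDS}}\ge P^{*}_{k,\text{NDS}}$ by a truncation argument, and finally obtain $(3)$ by combining $(1)$ and $(2)$. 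Alternatively, one may simply invoke Proposition \ref{motivation}, whose proof is in \cite{mlq1}, and add only the single observation below that the relevant atomic measures also satisfy the gradient constraints.

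\textbf{Item (2).} Given $a_{1},\dots,a_{N}\in\mathcal{V}_{f}$ and $\lambda_{1},\dots,\lambda_{N}\ge 0$ with $\sum_{i}\lambda_{i}=1$, put $y_{\alpha}:=\sum_{i=1}^{N}\lambda_{i}a_{i}^{\alpha}$ for $|\alpha|\le k$. Then $M_{d}(y)=\sum_{i}\lambda_{i}V_{d}V_{d}^{T}(a_{i})=M$, because the generalized Hankel matrices form a linear subspace of $S\R^{s_{d}\times s_{d}}$ containing each $V_{d}V_{d}^{T}(a_{i})$; moreover $M\succeq 0$, being a nonnegative combination of rank-one positive semidefinite matrices, and $y_{(0,\dots,0)}=\sum_{i}\lambda_{i}=1$. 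The one point that is genuinely about the NDS relaxation is the localizing condition: the component of $V_{k,\partial f/\partial X_{j}}(y)$ indexed by a monomial $\underline{X}^{\beta}$ is obtained from the polynomial $\underline{X}^{\beta}\,\partial f/\partial X_{j}$ by the substitution $\underline{X}^{\gamma}\mapsto y_{\gamma}$, so it equals $\sum_{i}\lambda_{i}\,a_{i}^{\beta}\,\frac{\partial f}{\partial X_{j}}(a_{i})=0$ since each $a_{i}$ lies in $\mathcal{V}_{f}$. Hence $V_{k,\partial f/\partial X_{j}}(y)=0$ for every $j$, so $y$ is feasible for $(P_{k,\text{NDS}})$, and its objective value is $\sum_{|\alpha|\le k}f_{\alpha}y_{\alpha}=\sum_{i}\lambda_{i}f(a_{i})\ge (\sum_{i}\lambda_{i})\,P^{*}=P^{*}$, using $f(a_{i})\ge P^{*}$ and $\lambda_{i}\ge 0$.

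\textbf{Items (1) and (3).} Since the NDS relaxation is defined under the standing hypothesis $P^{*}=f(x^{*})$ for some $x^{*}\in\R^{n}$, Fermat's rule gives $x^{*}\in\mathcal{V}_{f}$, and $(2)$ applied with $N=1$, $a_{1}=x^{*}$, $\lambda_{1}=1$ produces a feasible point of $(P_{k,\text{NDS}})$ of value $P^{*}$; hence $P^{*}\ge P^{*}_{k,\text{NDS}}$ for every $k$. For monotonicity, let $\pi:\R^{s_{k+1}}\to\R^{s_{k}}$ be the truncation $y\mapsto(y_{\alpha})_{|\alpha|\le k}$; if $y$ is feasible for $(P_{k+1,\text{NDS}})$ then $M_{\lfloor k/2\rfloor}(\pi(y))$ is a principal submatrix of $M_{\lfloor(k+1)/2\rfloor}(y)$, hence positive semidefinite; the components of $V_{k,\partial f/\partial X_{j}}$ form a subset of those of $V_{k+1,\partial f/\partial X_{j}}$ and each involves only moments of degree $\le k$, so the degree-$k$ localizing identities hold for $\pi(y)$; and $\sum_{|\alpha|\le k+1}f_{\alpha}y_{\alpha}=\sum_{|\alpha|\le k}f_{\alpha}\pi(y)_{\alpha}$ because $f\in\R[\underline{X}]_{k}$. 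Taking the infimum over feasible $y$ yields $P^{*}_{k+1,\text{NDS}}\ge P^{*}_{k,\text{NDS}}$, proving $(1)$. For $(3)$, let $y$ be optimal for $(P_{k,\text{NDS}})$ with $M_{d}(y)=\sum_{i=1}^{N}\lambda_{i}V_{d}V_{d}^{T}(a_{i})$, $\lambda_{i}>0$, $a_{i}\in\mathcal{V}_{f}$; the computation in $(2)$ gives $P^{*}_{k,\text{NDS}}=\sum_{|\alpha|\le k}f_{\alpha}y_{\alpha}=\sum_{i}\lambda_{i}f(a_{i})\ge P^{*}$, which together with $P^{*}_{k,\text{NDS}}\le P^{*}$ forces $P^{*}=P^{*}_{k,\text{NDS}}=\sum_{i}\lambda_{i}f(a_{i})$. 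Since $f(a_{i})\ge P^{*}$ for all $i$, $\lambda_{i}>0$ and $\sum_{i}\lambda_{i}=1$, this equality is only possible if $f(a_{i})=P^{*}$ for every $i$, i.e. $a_{1},\dots,a_{N}$ are global minimizers of $f$.

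\textbf{Expected difficulty.} There is no deep step: the whole argument is a transcription of the Moment-relaxation case with one extra line. The only thing requiring care is the index bookkeeping for the localizing vectors $V_{k,\partial f/\partial X_{j}}$ and the floors $\lfloor k/2\rfloor$ versus $\lfloor(k+1)/2\rfloor$, to make sure that the degree-$k$ gradient identities involve only the moments $y_{\alpha}$ with $|\alpha|\le k$ and that they embed into the degree-$(k+1)$ ones.
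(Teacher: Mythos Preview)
Your proof is correct and essentially complete. The paper itself does not give a proof: it simply writes ``For a proof of this Proposition we refer the reader to \cite[Proposition 3.9]{mlq1}'', i.e.\ the same external reference used for Proposition~\ref{motivation}. Your write-up supplies precisely the details that this citation is meant to cover, and you even flag that alternative (``one may simply invoke Proposition~\ref{motivation}\ldots and add only the single observation below''), which is exactly the paper's route.

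Two minor remarks. First, in item~$(2)$ the statement omits the hypothesis $\lambda_{i}\ge 0$ (as does Proposition~\ref{motivation}(2)); your argument tacitly and correctly reads it in, since without it neither $M\succeq 0$ nor $\sum_{i}\lambda_{i}f(a_{i})\ge P^{*}$ would follow. Second, for $P^{*}\ge P^{*}_{k,\text{NDS}}$ you invoke the attainment hypothesis $P^{*}=f(x^{*})$; this is indeed part of the standing assumption in the paper's Definition of the NDS relaxation (see the line ``such that there exists $x\in\R^{n}$ with $P^{*}=f(x)$'' preceding \eqref{NDS}), even though Proposition~\ref{calor} does not repeat it. You are right to make the dependence explicit.
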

\begin{proof}
For a proof of this Proposition we refer the reader to \cite[Proposition 3.9]{mlq1}.
\end{proof}

The Theorem \ref{above} also holds for the polynomial optimization problems with constraints adding an extra condition that the nodes should belong to the semialgebraic set, in this case to the real gradient variety.

\begin{teor}\label{above2}
Let $f\in\R[\underline{X}]_{k}$ and $(P)$ be the polynomial optimization problem without constraints defined in \eqref{Popw}, and let $y\in\R^{s_{k}}$ be an optimal solution of the Moment relaxation $(P_{k,\text{NDS}})$ and set $d:=\left\lfloor{\frac{k}{2}}\right\rfloor$. Then the following conditions hold:
\begin{enumerate}
\item If $\widetilde{M_{d}(y)}$ is Generalized Hankel and $f\in\R[\underline{X}]_{k-1}$  then there exist $\lambda_{1}>0,\ldots,\lambda_{r}>0$ and $a_{1},\ldots,a_{N}\in\R^{n}$ such that:
\begin{equation}
\widetilde{M_{d}(y)}=\sum_{i=1}^{N}\lambda_{i}V_{d}V_{d}^{T}(a_{i})\in S\R^{s_{d} \times s_{d}}\nonumber
\end{equation}
In case $a_{1},\ldots,a_{N}\in \mathcal{V}_{f}$ then $P^{*}=P^{*}_{k,\text{NDS}}$, and $a_{1},\ldots,a_{N}$ are minimimizers of $(P)$.
\item If $M_{d}(y)$ is flat, then there exists  $\lambda_{1}>0,\ldots,\lambda_{N}>0$ and $a_{1},\ldots,a_{N}\in\R^{n}$ such that:
\begin{equation}
M_{d}(y)=\sum_{i=1}^{N}\lambda_{i}V_{d}V_{d}^{T}(a_{i})\in S\R^{s_{d} \times s_{d}}\nonumber
\end{equation}
If $a_{1},\ldots,a_{N}\in \mathcal{V}_{f}$ then $P^{*}=P^{*}_{k,\text{NDS}}$ and $a_{1},\ldots,a_{N}$ are minimimizers of $(P)$.
\end{enumerate}
\end{teor}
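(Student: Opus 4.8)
The plan is to reduce Theorem \ref{above2} directly to Theorem \ref{above}, since the only difference between $(P_k)$ and $(P_{k,\text{NDS}})$ is the extra family of linear localizing constraints $V_{k,\frac{\partial f}{\partial X_i}}(y)=0$, and these constraints do not affect the structure of the moment matrix $M_d(y)$ nor the purely linear-algebraic machinery (flatness, the modified moment matrix $\widetilde{M_d(y)}$, the Curto--Fialkow style flat extension) used to prove Theorem \ref{above}. First I would observe that a feasible solution $y$ of $(P_{k,\text{NDS}})$ is in particular a feasible solution of $(P_k)$, so the hypotheses of Theorem \ref{above} on $M_d(y)$ (either $\widetilde{M_d(y)}$ generalized Hankel with $f\in\R[\underline X]_{k-1}$, or $M_d(y)$ flat) apply verbatim. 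Invoking Theorem \ref{above} then yields $\lambda_1>0,\dots,\lambda_N>0$ and points $a_1,\dots,a_N\in\R^n$ with the asserted decomposition $\widetilde{M_d(y)}=\sum_i\lambda_i V_dV_d^T(a_i)$ (resp. $M_d(y)=\sum_i\lambda_i V_dV_d^T(a_i)$). This is exactly the first conclusion in each item of the statement, with no hypothesis on the gradient variety needed.

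The second, conditional part is where the gradient constraints enter. Assuming in addition that all $a_i\in\mathcal V_f$, I would argue that $P^*=P^*_{k,\text{NDS}}$ and that the $a_i$ are true minimizers of $(P)$. The value of the NDS relaxation at $y$ is $\sum_{|\alpha|\le k}f_\alpha y_\alpha$; using the decomposition of the moment matrix (and, in the case of item (1), the fact that $f\in\R[\underline X]_{k-1}$ so that $f$ only sees the entries of $M_d(y)$ that agree with those of $\widetilde{M_d(y)}$) one gets $\sum_{|\alpha|\le k}f_\alpha y_\alpha=\sum_i\lambda_i f(a_i)$. Since each $a_i\in\mathcal V_f\subseteq\R^n$, Proposition \ref{calor}(2) gives $\sum_i\lambda_i f(a_i)\ge P^*$; combined with the general inequality $P^*\ge P^*_{k,\text{NDS}}$ from Proposition \ref{calor}(1) and the fact that $y$ is \emph{optimal} (so $\sum_{|\alpha|\le k}f_\alpha y_\alpha=P^*_{k,\text{NDS}}$), we obtain the chain $P^*\ge P^*_{k,\text{NDS}}=\sum_i\lambda_i f(a_i)\ge P^*$, forcing equality throughout. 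Equality $\sum_i\lambda_i f(a_i)=P^*$ with $\lambda_i>0$ and each $f(a_i)\ge P^*$ then forces $f(a_i)=P^*$ for every $i$, i.e. the $a_i$ are minimizers.

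The main obstacle I anticipate is the bookkeeping in item (1) linking $\sum_{|\alpha|\le k}f_\alpha y_\alpha$ to $\sum_i\lambda_i f(a_i)$ when one only controls $\widetilde{M_d(y)}$ rather than $M_d(y)$ itself: one must check that the hypothesis $f\in\R[\underline X]_{k-1}$ (equivalently $\deg f\le k-1$, so $\deg f\le 2d-1$) guarantees that the coefficient vector of $f$ pairs with $M_d(y)$ only through the block $A$ and its products $AW$, $W^TA$ — precisely the entries preserved in passing to $\widetilde{M_d(y)}$ — so that evaluating $f$ against $y$ equals evaluating $f$ against the finitely-atomic representing measure $\sum_i\lambda_i\delta_{a_i}$. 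This is the same computation already carried out in the proof of Theorem \ref{above} (\cite[Corollary 7.3]{mlq1}), so I would cite it rather than redo it. Everything else — that the localizing constraints are linear, that they do not enter the flat-extension argument, and that Proposition \ref{calor} supplies the needed lower bound once membership in $\mathcal V_f$ is known — is routine, and the proof amounts to assembling these pieces and citing \cite[Corollary 7.3]{mlq1} for the underlying rank/flatness argument.
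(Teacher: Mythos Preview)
Your proposal is correct and matches the paper's own treatment, which simply defers the entire proof to \cite[Theorem 7.1]{mlq1}; you have in effect written out the reduction that the citation hides. One small wording issue: Theorem~\ref{above} as stated assumes $y$ is \emph{optimal} for $(P_k)$, whereas an optimal solution of $(P_{k,\text{NDS}})$ is only \emph{feasible} for $(P_k)$, so you cannot invoke Theorem~\ref{above} verbatim --- but you clearly recognize that the decomposition itself is the purely structural Curto--Fialkow flat-extension statement (independent of optimality), and you correctly supply the optimality part afterward via Proposition~\ref{calor} and the chain $P^*\ge P^*_{k,\text{NDS}}=\sum_i\lambda_i f(a_i)\ge P^*$.
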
 

\begin{proof}
The proof is in  \cite[Theorem 7.1]{mlq1}
\end{proof}

\newpage

\begin{algorithm}\label{upper2}
  \KwIn{A polynomial optimization problem $(\textbf{P})$ \eqref{Pop} without constraints and a strategic $\lambda\in [0,1]$.}
  \KwOut{An upper bound $U\geq\textbf{P}^{*}$ and if possible, points $\textbf{a}_{1},\ldots,\textbf{a}_{r}\in\mathcal{V}_{f}$ such that $f(\textbf{a}_{\textbf{i}})=U$.}\
Compute an optimal solution of the $\text{NDS}$ relaxation $(P_{\deg f,\text{NDS}})$. Denote it by $y\in\R^{s_{\deg f}}$ and set $k:=\deg f$ and $d:=\lfloor\frac{\deg{f}}{2}\rfloor$ and $\textbf{M}:=M_{d}(y)$, . \;
Take a maximum linearly independent subset of the first $s_{d-1}$ columns of $\textbf{M}$ and denote it by $\{M_{1},\ldots,M_{l}\}$ and set $C:=(M_{1}\cdots M_{l})$. \;
Define the folowing matrix using the definition of the solution of the least squares problem \eqref{vectorprojection},
$\textbf{B}_{\textbf{M}}:=(M_{1}\cdots M_{s_{d-1}}|Cx^{*}_{C,M_{s_{d-1}+1}}\cdots Cx^{*}_{C,M_{s_{d}}})$.\;
Finally define the following matrix:
\begin{equation}
\textbf{T}_{\textbf{M}}:=\left(
\begin{array}{c|c}
I_{s_{d}s_{d}}& \begin{array}{c} (A_{M})_{1,1}-(B_{M})_{1,1}\\ \vdots\\  (A_{M})_{s_{d},s_{d}}-(B_{M})_{s_{d},s_{d}} \end{array} \\ \cline{1-2}

\begin{array}{ccc}(A_{M})_{1,1}-(B_{M})_{1,1} & \cdots  & (A_{M})_{s_{d},s_{d}}-(B_{M})_{s_{d},s_{d}} \end{array} & E||M-B_{M}||^{2}
   \end{array}
    \right)\nonumber
\end{equation}
where the unknows are $\textbf{E}$ and the entries of the matrix $\textbf{A}_{\textbf{M}}\in S\R^{s_{d}\times s_{d}}$. \;
Solve the following positive semidefinite program:
\begin{align}
(P_{M,\lambda,\text{NDS}}) \text{   minimize } \nonumber E\lambda+(1-\lambda)\sum_{|\alpha|\leq k}f_{\alpha}y_{0,\alpha}  & \text{ subject to }\\  \nonumber
& \textbf{A}_{\textbf{M}}=M_{d}(y_{0})\\ \nonumber
& E \in\R \text{ , } T_{M}\succeq 0\\  \nonumber
& \textbf{A}_{\textbf{M}}\succeq 0 \\ \nonumber
& V_{k,\frac{\partial f}{\partial X_{i}}}(y_{0})=0\text{ for all }i\in\{1,\ldots,n\}\nonumber
\end{align}
Note that here the decision variables are $E$ and the entries of the matrix $\textbf{A}_{\textbf{M}}\in S\R^{s_{d}\times s_{d}}$ which are $y_{0}\in\R^{s_{k}}$. \;
\If {$\textbf{A}_{\textbf{M}}$ is flat or $||\textbf{A}_{\textbf{M}}-\textbf{B}_{\textbf{M}}||$ small enough} { $\textbf{U}:=\sum_{|\alpha|\leq k}f_{\alpha}y_{0,\alpha}$ and extract minimizers if possible with \cite[Algorithm 1]{mlq1}.}
\Else{$\textbf{M}:=\textbf{A}_{\textbf{M}}$ and go to $\textbf{3}$} 

  \caption{Finding an upper bound $U$ for $(P)$ \eqref{Popw}, $U\geq P^{*}$ using the $NDS$ relaxation and if possible potential minimizers or minimizers}
  
\end{algorithm}

\begin{rem}
In Algorithm \ref{upper2} since $y_{0}\in\R^{s_{k}}$ is a feasible solution of $(P_{k,\text{NDS}})$ then it holds $U\geq P^{*}_{k,\text{NDS}}$. Moreover if $\textbf{A}_{\textbf{M}}$ is flat then by \ref{above2} $(2)$, there exits $a_{1},\ldots,a_{N} \in \R^{n}$ and $\lambda_{1}>0,\ldots,\lambda_{N}>0$ such that:
\begin{equation}
A_{M}=\sum_{i=1}^{N}\lambda_{i}V_{d}V_{d}^{T}(a_{i})\in S\R^{s_{d} \times s_{d}}\nonumber
\end{equation}
But in contrast with Algorithm \ref{upper} we need to check that $a_{1},\ldots,a_{N}\in\mathcal{V}_{f}$ to apply \ref{calor} $(2)$ in order to conclude:
\begin{equation}
 P^{*}\in [P^{*}_{k,\text{NDS}},U]\nonumber
\end{equation}
\end{rem}

\section{Numerical results of Algorithm 2}

\begin{ej}
Let us consider the  problem of minimizing the so called \textit{Robinson polynomial}:
\begin{equation}
X_{1}^{6}+X_{2}^{6}+1-(X_{1}^{4}X_{2}^{2}+X_{2}^{4}+X_{1}^{2}+X_{1}^{2}X_{2}^{4}+X_{2}^{2}+X_{1}^{2})+3X_{1}^{2}X_{2}^{2}\nonumber
\end{equation}
 a nonnegative polynomial which is not sum of squares (see \cite{reznick,Robinson}):
\begin{equation}
\begin{aligned}
& {\text{minimize}}
& & f(x)=x_{1}^{6}+x_{2}^{6}+1-(x_{1}^{4}x_{2}^{2}+x_{2}^{4}+x_{1}^{4}+x_{1}^{2}x_{2}^{4}+x_{2}^{2}+x_{1}^{2})+3x_{1}^{2}x_{2}^{2}\\  \nonumber
& \text{subject to}
& & x\in\R^{2}\\
\end{aligned}
\end{equation}
This polynomial attains the minimum in the $\text{NDS}$ relaxation of degree 8, since we get the following optimal solution $z\in\R^{s_{8}}$\eqref{madremia}:\\

{\scriptsize
\begin{align}\label{madremia}
 M_{4}(z)=\begin{blockarray}{ccccccccccccccccc}
\text{ } & 1 & X_{1} & X_{2} & X_{1}^2 & X_{1}X_{2} & X_{2}^2 & X_{1}^{3} & X_{1}^{2}X_{2} & X_{1}X_{2}^{3} & X_{2}^{3}  \\
\begin{block}{r(rrrrrrrrrrrrrrrr} 
1&1.00&-0.00&0.00&0.64&0.00&0.64&-0.00&0.00&0.00&0.00\\
X_{1}&-0.00&0.64&0.00&-0.00&0.00&0.00&0.64&0.00&0.29&0.00\\
X_{2}&0.00&0.00&0.64&0.00&0.00&0.00&0.00&0.29&0.00&0.64\\
X_{1}^{2}&0.64&-0.00&0.00&0.64&0.00&0.29&-0.00&0.00&0.00&0.00\\
X_{1}X_{2}&0.00&0.00&0.00&0.00&0.29&0.00&0.00&0.00&0.00&0.00\\
X_{2}^{2}&0.64&0.00&0.00&0.29&0.00&0.64&0.00&0.00&0.00&0.00\\
X_{1}^{3}&-0.00&0.64&0.00&-0.00&0.00&0.00&0.64&0.00&0.29&0.00\\
X_{1}^{2}X_{2}&0.00&0.00&0.29&0.00&0.00&0.00&0.00&0.29&0.00&0.29\\
X_{1}X_{2}^{2}&0.00&0.29&0.00&0.00&0.00&0.00&0.29&0.00&0.29&0.00\\
X_{2}^{3}&0.00&0.00&0.64&0.00&0.00&0.00&0.00&0.29&0.00&0.64\\
X_{1}^{4}&0.64&-0.00&0.00&0.64&0.00&0.29&-0.00&0.00&0.00&0.00\\
X_{1}^{3}X_{2}&0.00&0.00&0.00&0.00&0.29&0.00&0.00&0.00&0.00&0.00\\
X_{1}^{2}X_{2}^{2}&0.29&0.00&0.00&0.29&0.00&0.29&0.00&0.00&0.00&0.00\\
X_{1}X_{2}^{3}&0.00&0.00&0.00&0.00&0.29&0.00&0.00&0.00&0.00&0.00\\
X_{1}^{4}&0.64&0.00&0.00&0.29&0.00&0.64&0.00&0.00&0.00&0.00\\
\end{block}
   \end{blockarray}\\ \nonumber
\begin{blockarray}{ccccccccccccccccc}
X_{1}^{4} & X_{1}^{3}X_{2} & X_{1}^{2}X_{2}^{2} & X_{1}X_{2}^{3} & X_{2}^{4} & \text{ }\\
\begin{block}{ccccc)cccccccccccc}
0.64&0.00&0.29&0.00&0.64&1\\
-0.00&0.00&0.00&0.00&0.00&X_{1}\\
0.00&0.00&0.00&0.00&0.00&X_{2}\\
0.64&0.00&0.29&0.00&0.29&X_{1}^{2}\\
0.00&0.29&0.00&0.29&0.00&X_{1}X_{2}\\
0.29&0.00&0.29&0.00&0.64&X_{2}^{2}\\
-0.00&0.00&0.00&0.00&0.00&X_{1}^{3}\\
0.00&0.00&0.00&0.00&0.00&X_{1}^{2}X_{2}\\
0.00&0.00&0.00&0.00&0.00&X_{1}X_{2}^{2}\\
0.00&0.00&0.00&0.00&0.00&X_{2}^{3}\\
1.01&-0.00&0.65&-0.00&0.65&X_{1}^{4}\\
-0.00&0.65&-0.00&0.65&-0.00&X_{1}^{3}X_{2}\\
0.65&-0.00&0.65&-0.00&0.65&X_{1}^{2}X_{2}^{2}\\
-0.00&0.65&-0.00&0.65&-0.00&X_{1}X_{2}^{3}\\
0.65&-0.00&0.65&-0.00&1.01&X_{2}^{4}\\
\end{block}
   \end{blockarray}\\ \nonumber
\end{align}}

with optimal value $P^{*}_{8,\text{NDS}}=1.3558\cdot 10^{-10}\approx 0$. We can conclude optimality since $\widetilde{M_{4}(y)}$ is a generalized Hankel matrix, or equivalently, the truncated multiplication operators commute and we can get with \cite[Algorithm 1]{mlq1} 8 potential minimizers:
\begin{equation}
(\pm 1 ,\pm 1),(\pm 1, 0),(0,\pm 1)\in \mathcal{V}_{f}.\nonumber
\end{equation}
 Therefore with the $\text{NDS}$ relaxation of degree 8 by Theorem \ref{calor} $(1)$ we have: 
\begin{equation}
P^{*}=P^{*}_{8,\text{NDS}}=f(\pm 1,\pm 1)=f(\pm 1,0)=f(0,\pm 1)=0.\nonumber
\end{equation}
Let us now use the Algorithm \ref{upper2} to see if we can get an upper bound for the minimum and potential miminimizers starting with a moment matrix of lower dimension which is  an optimal solution of the $\text{NDS}$ relaxation of degree $6$. Namely, we start with the moment matrix:

\begin{align}
\textbf{M}:=M_{3}(y)=\begin{tiny}\begin{blockarray}{ccccccccccc}\nonumber
\text{ } & 1 & X_{1} & X_{2} & X_{1}^2 & X_{1}X_{2} & X_{2}^2 & X_{1}^{3} & X_{1}^{2}X_{2} & X_{1}X_{2}^{3} & X_{2}^{3} \\
\begin{block}{r(rrrrrrrrrr)}
1&1.00&0.00&0.00&1.39&0.00&1.39&0.00&0.00&-0.00&0.00\\
X_{1}&0.00&1.39&0.00&0.00&0.00&-0.00&2.37&0.00&1.50&0.00\\
X_{2}&0.00&0.00&1.39&0.00&-0.00&0.00&0.00&1.50&0.00&2.37\\
X_{1}^{2}&1.39&0.00&0.00&2.37&0.00&1.50&-0.00&0.00&-0.00&0.00\\
X_{1}X_{2}&0.00&0.00&-0.00&0.00&1.50&0.00&0.00&-0.00&0.00&-0.00\\
X_{2}^{2}&1.39&-0.00&0.00&1.50&0.00&2.37&-0.00&0.00&-0.00&0.00\\
X_{1}^{3}&0.00&2.37&0.00&-0.00&0.00&-0.00&3946.00&-0.00&3945.46&-0.00\\
X_{1}^{2}X_{2}&0.00&0.00&1.50&0.00&-0.00&0.00&-0.00&3945.46&-0.00&3945.46\\
X_{1}X_{2}^{2}&-0.00&1.50&0.00&-0.00&0.00&-0.00&3945.46&-0.00&3945.46&-0.00\\
X_{2}^{3}&0.00&0.00&2.37&0.00&-0.00&0.00&-0.00&3945.46&-0.00&3946.00\\
\end{block}
\end{blockarray}
\end{tiny}
\end{align}

with optimal value $P^{*}_{6,\text{NDS}}=-0.9333$. After applying the Algorithm \eqref{upper2} we get the following results, see Table \ref{table:3G}. Here we obviously get that $P^{*}_{6,\text{NDS}}=-0.9333\leq P_{M,\lambda,\text{NDS}}^{*}$, but we can no certify the belonging to any interval since unfortunately we do not get neither flat nor the truncated multiplication operators commute. 

\begin{table}
\begin{tabular}{ |c|c|c|c|c|c|c| } 
\hline
 & $U$ & Flat & $||A_{M}-B_{M}||$ & Potential Minimizers & Iterations & Time \\ \cline{1-7}
 $\lambda=\frac{1}{100}$ & $-0.9278$  & No & $7.329\cdot 10^{3}$ & - & $10$ &  $33^{S}$ \\ \cline{1-7}
 $\lambda=\frac{1}{60}$ & $-0.9288$ & No & $7.1978\cdot 10^{3}$ & - & $10$ &  $40^{S}$ \\ \cline{1-7}
 $\lambda=\frac{1}{30}$  & $-0.5709$  & No &  $6.8149\cdot 10^{3}$ & - & $10$ & $50^{S}$ \\ \cline{1-7}
 $\lambda=\frac{1}{10}$ &-0.0159 & No & $6.1421\cdot 10^{3}$ & - & $10$ & $48^{S}$\\ \cline{1-7}
 $\lambda= \frac{1}{5}$ & $0.0549$ & No &$5.9942\cdot 10^{3} $& -& $10$ & $36^{S}$\\ \cline{1-7} 
$\lambda= \frac{1}{4}$ & $0.0670$ & No &$6.0629\ddot 10^{3}$& -& $500$ & $40^{M}$\\ \cline{1-7}
 $\lambda=\frac{3}{4}$  & $0.1018$ &  No & $5.8919\cdot 10^{3}$ & - &  $10$ & $38^{S}$ \\ \cline{1-7}
 $\lambda = 1$ & $1.060$& No &$5.7389\cdot 10^{3}$ &-&$10$& $36^{S}$\\ \cline{1-7}
\hline 
\end{tabular}
\caption{Data of the Algorithm \ref{upper2} for the Robinson polynomial}
\label{table:3G}
\end{table}
\end{ej}

\begin{ej}
Let us consider again the problem of minimizing the Motzkin polynomial:
\begin{equation}
\begin{aligned}
& {\text{minimize}}
& & f(x)=x_{1}^{4}x_{2}^{2}+x_{1}^2x_{2}^{4}-3x_{1}^2x_{2}^2+1 \\  \nonumber
& \text{subject to}
& & x\in\R^{2}\\
\end{aligned}
\end{equation}

As it was mentioned in \cite{NDS} this polynomial attains the minimum in the relaxation $(P_{8,\text{NDS}})$. After applying the Algorithm \ref{upper2} we get the following information, see Table \ref{table:4G}. Notice that we start with an optimal solution, associated to the matrix $M$, of the relaxation $(P_{6,\text{NDS}})$. More precise we start the algorithm with the following matrix:
{\scriptsize
\begin{align}
M:=M_{3}(y)=\begin{blockarray}{cccccccc}\nonumber
\text{ } & 1 & X_{1} & X_{2} & X_{1}^2 & X_{1}X_{2} & X_{2}^2 \\
\begin{block}{c(ccccccc}
1&1.00&-0.00&-0.00&663.07&-0.00&663.07\\
X_{1}&-0.00&663.07&-0.00&-0.00&-0.00&0.00\\
X_{2}&-0.00&-0.00&663.07&-0.00&0.00&-0.00\\
X_{1}^{2}&663.07&-0.00&-0.00&891655.96&-0.00&423.13\\
X_{1}X_{2}&-0.00&-0.00&0.00&-0.00&423.13&-0.00\\
X_{2}^{2}&663.07&0.00&-0.00&423.13&-0.00&891655.96\\
X_{1}^{3}&-0.00&891655.96&-0.00&-0.00&0.00&-0.00\\
X_{1}^{2}X_{2}&-0.00&-0.00&423.13&0.00&-0.00&-0.00\\
X_{1}X_{2}^{2}&0.00&423.13&-0.00&-0.00&-0.00&0.00\\
X_{2}^{3}&-0.00&-0.00&891655.96&-0.00&0.00&-0.00\\
\end{block}
   \end{blockarray}\\ \nonumber
	\begin{blockarray}{ccccccc}
X_{1}^{3} & X_{1}^{2}X_{2} & X_{1}X_{2}^{2} & X_{2}^{3} & \text{ }\\
\begin{block}{cccc)ccc}
-0.00&-0.00&0.00&-0.00&1\\
891655.96&-0.00&423.13&-0.00&X_{1}\\
-0.00&423.13&-0.00&891655.96&X_{2}\\
-0.00&0.00&-0.00&-0.00&X_{1}^{2}\\
0.00&-0.00&-0.00&0.00&X_{1}X_{2}\\
-0.00&-0.00&0.00&-0.00&X_{2}^{2}\\
4116203733.28&-0.00&423.13&-0.00&X_{1}^{3}\\
-0.00&423.13&-0.00&423.13&X_{1}^{2}X_{2}\\
423.13&-0.00&423.13&-0.00&X_{1}X_{2}^{2}\\
-0.00&423.13&-0.00&4116203751.60&X_{2}^{3}\\
\end{block}
   \end{blockarray}\\ \nonumber
\end{align}}

with associated optimal value $P^{*}_{6,\text{NDS}}=-422,13$.
\begin{table}
\begin{tabular}{ |c|c|c|c|c|c|c| } 
\hline
 & $U$ & Flat & $||A_{M}-B_{M}||$ & Potential Minimizers & Iterations & Time \\ \cline{1-7}

 $\lambda=\frac{1}{1000}$ & -30.6672  & No & $1.1486\cdot 10^{6}$ & - & $50$ &  $4^{M}42^{S}$ \\ \cline{1-7}
 $\lambda=\frac{1}{100}$ & 0.1688  & No & $1.1486\cdot 10^{6}$ & - & $10$ &  $50^{S}$ \\ \cline{1-7}
 $\lambda=\frac{1}{60}$ & $0.2458$ & No & $1.4886\cdot 10^{6}$ & - & $10$ &  $40^{S}$ \\ \cline{1-7}
$\lambda=\frac{1}{4}$ & $-0.8789$ & No & $1.4887\cdot 10^{6}$ & - & $50$ &  $4^{M}$ \\ \cline{1-7}
 $\lambda=\frac{1}{2}$  & $0.4232$  & No &  $1.1485\cdot 10^{6}$ & - & $10$ & $41^{S}$ \\ \cline{1-7}
$\lambda=\frac{3}{4}$ & 1 & Yes & $1.1629\cdot 10^{6}$ & $(0,\pm 44.9350),(\pm 44.9350,0)$ & $50$ & $4^{M}42^{S}$\\ \cline{1-7}
 $\lambda=1$ & 1 & Yes & $1.1629\cdot 10^{6}$ & $(0,\pm 44.9352),(\pm 44.9352,0)$ & $20$ & $1^{M}20^{S}$\\ \cline{1-7}
\hline 
\end{tabular}
\caption{Data of the Algorithm \ref{upper2} for the Motzkin polynomial}
\label{table:4G}
\end{table}

As we can see in the Table \ref{table:4G}, we get $P^{*}_{M,1,\text{NDS}}=1$ after $20$ iterations of the Algorithm \ref{upper2}. Namely we get the following flat optimal solution of $(P_{M,1,\text{NDS}})$:

{\scriptsize
\begin{align}
A_{M}=M_{3}(y_{0})=
\begin{blockarray}{cccccccc}\nonumber
\text{ } & 1 & X_{1} & X_{2} & X_{1}^2 & X_{1}X_{2} & X_{2}^2 \\
\begin{block}{c(ccccccc}
1&1.00&0.00&-0.00&1009.61&0.00&1009.61\\
X_{1}&0.00&1009.61&0.00&0.02&0.00&-0.00\\
X_{2}&-0.00&0.00&1009.61&0.00&-0.00&-0.04\\
X_{1}^{2}&1009.61&0.02&0.00&2038565.01&-0.00&0.00\\
X_{1}X_{2}&0.00&0.00&-0.00&-0.00&0.00&-0.00\\
X_{2}^{2}&1009.61&-0.00&-0.04&0.00&-0.00&2038564.29\\
X_{1}^{3}&0.02&2038565.01&-0.00&-0.03&-0.00&0.00\\
X_{1}^{2}X_{2}&0.00&-0.00&0.00&-0.00&0.00&0.00\\
X_{1}X_{2}^{2}&-0.00&0.00&-0.00&0.00&0.00&-0.00\\
X_{2}^{3}&-0.04&-0.00&2038564.29&0.00&-0.00&-0.30\\
\end{block}
   \end{blockarray}\\ \nonumber
	\begin{blockarray}{ccccccc}
X_{1}^{3} & X_{1}^{2}X_{2} & X_{1}X_{2}^{2} & X_{2}^{3} & \text{ }\\
\begin{block}{cccc)ccc}
0.02&0.00&-0.00&-0.04&1\\
2038565.01&-0.00&0.00&-0.00&X_{1}\\
-0.00&0.00&-0.00&2038564.29&X_{2}\\
-0.03&-0.00&0.00&0.00&X_{1}^{2}\\
-0.00&0.00&0.00&-0.00&X_{1}X_{2}\\
0.00&0.00&-0.00&-0.30&X_{2}^{2}\\
4116200366.80&0.00&0.00&-0.00&X_{1}^{3}\\
0.00&0.00&-0.00&0.00&X_{1}^{2}X_{2}\\
0.00&-0.00&0.00&0.00&X_{1}X_{2}^{2}\\
-0.00&0.00&0.00&4116200384.54&X_{2}^{3}\\
\end{block}
   \end{blockarray}\\ \nonumber
\end{align}}

Since $A_{M}$ is flat, by Theorem \ref{above2} (2),  we can find the decomposition:
\begin{equation}
A_{M}=0 \cdot V_{3}V_{3}^{T}(0,-44.9351)+\frac12\cdot V_{3}V_{3}^{T}(-44.9351,0)+0 \cdot V_{3}V_{3}^{T}(44.9351,0)+\frac12\cdot V_{3}V_{3}^{T}(0,44.9351)\nonumber
\end{equation}
Moreover since $(-44.9351,0),(0,44.9351)\in \mathcal{V}_{f}$ we can conclude by Theorem \ref{calor} $(2)$, that:
\begin{equation}
P^{*}\in [-422.13, 1 ]\nonumber
\end{equation}

\end{ej}

\section{Conclusions}
For some polynomials, for example for nonnegative polynomials that are not sums of squares, sometimes one needs to solve a a moment relaxation of a very "big" degree to find the infimum. In that cases could be useful to use Algorithm $1$ \ref{upper} to get an idea of an "small" interval where the infimum belong to. In practice one can verify that the more iterations we apply in Algorithm $1$ \ref{upper} the better are the possibilities to get a flat solution and in consequence the better are the possibilities to find an interval where the infimum belong to.

The practical examples show that Algorithm 1 \ref{upper} works better as Algorithm \ref{upper2}. This is not surprising since the latest has the bigger challenge of minimize a polynomial over a semialgebraic set, in this case, over its real gradient variety. The additional request to the semidefinite program to fulfill the linear conditions associated to the real gradient variety reduces the set of flat feasible solutions. However, one can also try to solve the problem of minimize a polynomial over an arbitrary semialgebraic set using the same ideas as in Algorithm 2 \ref{upper2}.

\section{Software}
To find an optimal solution of the Moment relaxation and for the algorithm of extracting minimizers we have used the following softwares:
\begin{itemize}\label{software}
\item YALMIP: developed by J. Löfberg. It is a toolbox for Modeling and Optimization in MATLAB. Published in  the Journal Proceedings of the CACSD Conference in 2004. For more information see: http:\slash\slash yalmip.github.io\slash. 
\item SEDUMI: developed by J. F. Sturm. It is a toolbox for optimization over symmetric cones. Published in the Journal Optimization Methods and Software in 1999. For more information see: http:\slash\slash sedumi.ie.lehigh.edu\slash.
\item MATLAB and Statistics Toolbox Release 2016a, The MathWorks, Inc., Natick, Massachusetts, United States.
\end{itemize}

\end{document}